\documentclass{amsart}
\usepackage{amsfonts}
\usepackage{amsmath,amssymb}
\usepackage{amsthm}
\usepackage{amscd}
\usepackage{graphics}
\usepackage{graphicx}

\theoremstyle{remark}{
\newtheorem{Def}{{\rm Definition}}

\newtheorem{Rem}{{\rm Remark}}

}
\theoremstyle{plain}
{

\newtheorem{Prop}{Proposition}
\newtheorem{Thm}{Theorem}

}

\begin{document}
\title[Precise shapes of certain height functions on products of spheres]{Height functions on products of spheres and associated Reeb digraphs and level sets}
\author{Naoki kitazawa}
\keywords{Smooth functions and maps. Height functions. Morse(-Bott) functions. Products of spheres. Derivatives. Reeb (di)graphs. \\
\indent {\it \textup{2020} Mathematics Subject Classification}: Primary~57R45, 58C05. Secondary~ 58C25.}

\address{Osaka Central Advanced Mathematical Institute (OCAMI) \\
3-3-138 Sugimoto, Sumiyoshi-ku Osaka 558-8585
TEL: +81-6-6605-3103
}
\email{naokikitazawa.formath@gmail.com}
\urladdr{https://naokikitazawa.github.io/NaokiKitazawa.html}
\maketitle
\begin{abstract}
{\it Height functions} are fundamental and important objects and tools in mathematics, especially in geometry such as differential topology and differential geometry and some related singularity theory of differentiable maps.

Our interest, especially interest of the author, lies in obtaining explicit lists of such functions. Recently, as information on so-called higher degrees, he is also interested in their naturally defined 1st derivatives. 

We consider natural maps on products of spheres which are variants of specific cases of so-called {\it moment maps} on {\it toric symplectic} manifolds. We also generalize cases of the canonical projections of the unit spheres. This is a further result on related previous study of the author. We use {\it Reeb graphs}, graphs being natural quotient spaces of manifolds of the domains of nice functions such as Morse-Bott functions, and consisting of connected components of level sets. They are fundamental tools and objects since the last century. 

\end{abstract}
\section{Introduction.}
\label{sec:1}
\subsection{Introductory exposition on height functions.}
\label{subsec:1.1}
A {\it height} function $c:X \subset {\mathbb{R}}^{m_0} \rightarrow {\mathbb{R}}$ is a real-valued smooth function on an $m$-dimensional smooth submanifold $X$ of the $m_0$-dimensional Euclidean space ${\mathbb{R}}^{m_0}$ represented as the restriction of the canonical projection ${\pi}_{m_0,1}:{\mathbb{R}}^{m_0} \rightarrow \mathbb{R}$, where ${\pi}_{k,k_1}:{\mathbb{R}}^{k} \rightarrow {\mathbb{R}}^{k_1}$ denotes the canonical projection, mapping the point $x=(x_1,x_2) \in {\mathbb{R}}^{k_1} \times {\mathbb{R}}^{k-k_1}={\mathbb{R}}^{k}$ to $x_1 \in {\mathbb{R}}^{k_1}$ ($k \geq k_1 \geq 1$).

The following are also important.
\begin{itemize}
	\item ${\mathbb{R}}^k$ (, where $\mathbb{R}$ in the case $k=1$,) is a Riemannian manifold equipped with the standard Euclidean metric. This is also seen as the $k$-dimensional real affine space.  
	\item The $m$-dimensional unit sphere $S^{m}:=\{(x_1, \ldots x_m,x_{m+1}) \in {\mathbb{R}}^{m_0}={\mathbb{R}}^{m+1} \mid {\Sigma}_{j=1}^{m+1} {x_j}^2=1\}$ is of simplest cases. This is generalized to the canonical projection obtained by the restriction of ${\pi}_{m+1,k_1}$ with $1 \leq k_1 \leq m+1$, whose image is the unit $k_1$-dimensional unit disk $D^{k_1} \subset {\mathbb{R}}^{k_1}$, which is a compact and connected submanifold of ${\mathbb{R}}^{k_1}$ whose boundary is $S^{k_1-1}$.  
	\item 
We can consider a differential $dc$ as a bundle morphism between the {\it tangent bundle} $TX$ over $X$ into the tangent bundle $T\mathbb{R}$ over $\mathbb{R}$. Mapping a tangent vector of length $1$ along (so-called) {\it positive gradient flow} by this and composing the square, we can define the {\it canonical 1st derivative} $c^{\prime,{\rm P}}$. This is a smooth function whose values are $0 \leq c(x) \leq 1$ (for a function $c$ of a certain class of tame smooth functions).
\end{itemize}

Such functions are fundamental and important geometric objects and tools in mathematics, especially geometry such as differential geometry and differential topology.

\subsection{Our interest, especially main interest of the author, related to height functions, list of explicit cases.}
\label{subsec:1.2}
\subsubsection{Our interest on height functions and exposition on related history including one on traditional singularity theory.}
We are, and especially the author is, interested in obtaining explicit cases of height functions and their canonical 1st derivatives. This also comes from some singularity theory of differential functions and maps, applicable to such geometry.
It is important in such theory to construct explicit functions and maps and making lists of them.

Recently, the author has launched related studies \cite{kitazawa7, kitazawa8}, where we do not assume non-trivial knowledge or arguments there. There, the height functions of the unit spheres and the canonical projections of the unit spheres, presented in the first subsection, are respected. Related studies, or construction of smooth maps whose images are given regions in the Euclidean spaces, are pioneered in \cite{kitazawa3}, followed by the author himself in \cite{kitazawa4, kitazawa5, kitazawa6}. This respects so-called {\it special generic} maps, generalizing the canonical projections of the unit spheres and the so-called {\it Morse} functions in Reeb's sphere theorem. For special generic maps, see \cite{burletderham, furuyaporto, saeki1} and for traditional theory of Morse functions, which is still strong and important, see \cite{milnor1, milnor2}. For singularity theory of differentiable functions and maps, see a textbook \cite{golubitskyguillemin} for example.

Morse-Bott functions are important as generalized Morse functions. For this, see also \cite{banyagahurtubise, bott}. 
A certain specific class of so-called {\it moment maps} on so-called {\it symplectic and toric manifolds} is generalized to smooth maps locally represented as canonical products of two height functions of the unit spheres on $S^{m_1}$ and $S^{m_2}$, respectively. The images of these maps are rectangles or more generally, so-called convex polytopes.
Such maps are regarded to be variants of special generic maps. For related traditional theory, see   
\cite{buchstaberpanov, delzant}. Related to our theory, recent explicit studies on singularity theory of smooth maps into ${\mathbb{R}}^2$ such as \cite{kobayashi, kobayashiyamamoto} are important and deforming and visualizing these maps and the spaces of the domains are discussed explicitly, through calculations and observations. 
By composing the canonical projection to the straight line, we also have Morse-Bott functions in considerable cases here.  

Note also that these studies of Kobayashi and Yamamoto are regarded to be recent studies respecting traditional studies on higher dimensional versions of Morse functions such as \cite{thom, whitney}. We can also say that the studies on special generic maps are also located in such a story. 
\subsubsection{Our construction of functions, maps and manifolds, representations via Reeb graphs, and our main result.}

We consider natural reconstruction of smooth ({\it real algebraic}) maps onto rectangles in the plane. For this, we can explain via \cite{kitazawa4, kitazawa8}, where we do not need related knowledge. We review in a self-contained way and we only discuss specific cases from \cite{kitazawa8}. We can respect \cite{kobayashi, kobayashiyamamoto} and we respect some of them, where we discuss in a self-contained way. 

We use {\it Reeb graphs} of (nice) real-valued functions such as Morse-Bott functions. They have been fundamental and strong tools in understanding manifolds via nice functions, since \cite{reeb} and the establishment of traditional theory of Morse functions, in the midst of the 20th century. They are natural quotient spaces of the manifolds of the domains and regarded to be the space of all connected components of all preimages ({\it level sets}) of the functions. 

They are naturally graphs.

They are also important objects and related topological studies and combinatorial one are, incredibly, new. One of such studies is, reconstructing nice smooth functions with given Reeb graphs. This is pioneered in \cite{sharko} in 2006, by Sharko. \cite{masumotosaeki,michalak} are related studies in the 2010s. Since 2020s, Gelbukh has contributed to this by papers \cite{gelbukh1, gelbukh2, gelbukh3, gelbukh5, gelbukh6} and a preprint \cite{gelbukh8}, with a related survey preprint \cite{gelbukh7}. There local construction of smooth functions represented by branched covering, Morse functions, or so on, are important. Contribution of the author is, reconstruction respecting not only Reeb graphs but also shapes of level sets, such as \cite{kitazawa1, kitazawa3} and real algebraic one pioneered in \cite{kitazawa2}, followed by the author himself in \cite{kitazawa4, kitazawa5, kitazawa6} for example.

Our main result is roughly, summarized as Theorem \ref{thm:0}. Hereafter, the restriction of a map $c:X \rightarrow Y$ to $Z \subset X$ is denoted by $c {\mid}_Z$.
\setcounter{Thm}{-1}
\begin{Thm}
	\label{thm:0}
	Onto quadrilateral $D_{\rm R} \subset {\mathbb{R}}^2$
	surrounded by two parallel straight lines and additional two straight lines, reconstruction of canonical smooth {\rm (}real algebraic{\rm )} maps $f_{m_1,m_2}:X \rightarrow {\mathbb{R}}^2$ on manifolds $S^{m_1} \times S^{m_2}$ of dimension $m:=m_1+m_2$ embedded as smooth submanifolds $X \subset {\mathbb{R}}^{m_0}$ with $m_0:=m+3$ {\rm (}or $m_0:=m+2${\rm )} are considered with $m_1, m_2 \geq 1$.

What follows is our main result. 

In this way, height functions are obtained as the restrictions ${\pi}_{m+2,1} {\mid}_{X}$ and represented by ${\pi}_{2,1} \circ f_{m_1,m_2}$.

Their canonical 1st derivatives are studied. More precisely, their critical sets and fundamental differential topological properties are studied{\rm :} their Reeb graphs, their level sets, and images of the manifolds $X$ by ${\pi}_{m+2,2} {\mid}_{X}$ are explicitly presented in several quadrilaterals $D_{\rm R} \subset {\mathbb{R}}^2$.
\end{Thm}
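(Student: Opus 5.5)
The plan is to make Theorem \ref{thm:0} concrete by writing down an explicit real algebraic embedding and then reading everything off from a product structure. Fix a quadrilateral $D_{\rm R}$ bounded by two parallel lines, which after an affine change of coordinates we take to be $\{x_1=a\}$ and $\{x_1=b\}$ with $a<b$, together with the graphs $x_2=\alpha x_1+\beta$ and $x_2=\gamma x_1+\delta$ (lower and upper edges). In $\mathbb{R}^{m+3}$ with coordinates $(x_1,x_2,y,z)$, $y\in\mathbb{R}^{m_1}$, $z\in\mathbb{R}^{m_2+1}$, let $X$ be the zero set of the two polynomials
\[
(x_1-a)(b-x_1)-|y|^2,\qquad \bigl(x_2-\alpha x_1-\beta\bigr)\bigl(\gamma x_1+\delta-x_2\bigr)-|z|^2 .
\]
Put $f_{m_1,m_2}:={\pi}_{m+3,2}{\mid}_X$. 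The first factor is a round sphere $S^{m_1}$ over the interval $[a,b]$. For each fixed $x_1$, the second factor is a sphere $S^{m_2}$ over the fiber interval of $D_{\rm R}$. I will check by the Jacobian criterion that both equations define a regular value on $X$, which requires the upper edge to stay strictly above the lower one on $[a,b]$. It follows that $X\cong S^{m_1}\times S^{m_2}$, via $(x_1,y)\in S^{m_1}$ and a fiberwise rescaling of $(x_2,z)$. The image is exactly $D_{\rm R}$, and $f_{m_1,m_2}$ is locally the product of two canonical projections of spheres, as in the moment-map picture. The variant in $\mathbb{R}^{m+2}$ is obtained by absorbing one coordinate; I would note it but not redo the analysis.

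The height function $c={\pi}_{2,1}\circ f_{m_1,m_2}=x_1{\mid}_X$ is then handled through the product structure. It factors through the $S^{m_1}$-factor, so it is Morse-Bott with critical sets $\{x_1=a\}$ and $\{x_1=b\}$. Each of these is a copy of $S^{m_2}$, namely a fiber of $f$ over an edge, and they are respectively the minimum and maximum. Its Reeb graph is therefore a single edge when $m_1\ge 1$. The level sets are $S^{m_1-1}\times S^{m_2}$ over interior points, with two components when $m_1=1$. In that case the Reeb graph is instead two edges joining the two vertices, so I would separate the cases $m_1=1$ and $m_1\ge 2$.

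The substantive part is the canonical 1st derivative $c^{\prime,{\rm P}}=|\nabla c|^2$, computed with respect to the induced metric on $X$. I would compute it as $1-|{\rm proj}_{N_pX}e_1|^2$, where $e_1$ is the $x_1$-direction. The normal space $N_pX$ is spanned by the gradients $n_1,n_2$ of the two defining polynomials, so this reduces to the Gram-matrix formula
\[
c^{\prime,{\rm P}}=1-(e_1\cdot n_1,\,e_1\cdot n_2)\,G^{-1}\,(e_1\cdot n_1,\,e_1\cdot n_2)^{T},\qquad G=(n_i\cdot n_j).
\]
Every entry is an explicit polynomial in $x_1,x_2,|y|^2,|z|^2$, and on $X$ the quantities $|y|^2$ and $|z|^2$ can be eliminated. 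Hence $c^{\prime,{\rm P}}$ descends to a rational function $F$ on $D_{\rm R}$, that is, $c^{\prime,{\rm P}}=F\circ f_{m_1,m_2}$. Its critical set, Reeb digraph (edges oriented by increasing value) and level sets then follow from studying $F$ on $D_{\rm R}$. Each level set of $c^{\prime,{\rm P}}$ is the preimage under $f$ of a level curve of $F$, and its topology is determined by where that curve meets the edges of $D_{\rm R}$, because the $S^{m_1}$- and $S^{m_2}$-fibers collapse exactly over the corresponding edges.

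I expect the main obstacle to be the sign analysis of $\nabla F$ on $D_{\rm R}$ for non-rectangular quadrilaterals. When $\alpha=\gamma=0$ the normals decouple and $F$ depends on $x_1$ alone, so everything is explicit. For slanted edges the term $e_1\cdot n_2$ is nonzero and couples the two factors. I would therefore first treat several representative shapes: the rectangle, a trapezoid with one slanted edge, and a symmetric trapezoid. For these I would locate the critical points of $F$ in the interior and on the edges and determine their type, then draw the resulting Reeb digraphs. This is the sense of "several quadrilaterals" in the statement, and I would not attempt a uniform classification.
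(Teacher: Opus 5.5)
Your route differs from the paper's, and once one slip is fixed it works better than you expect. The slip: with $z\in\mathbb{R}^{m_2+1}$, the second equation cuts out a round $S^{m_2+1}$ in $(x_2,z)$-space over each $x_1$. So your $X$ is $S^{m_1}\times S^{m_2+1}$, of dimension $m+1$. Take $z\in\mathbb{R}^{m_2}$ instead. Then $X\subset\mathbb{R}^{m+2}$ is $m$-dimensional for every such quadrilateral, and no ``absorbing'' step is needed; pad with a zero coordinate if you want $\mathbb{R}^{m+3}$. Two smaller points: the Reeb graph of $c$ is a single edge only for $m_1\geq 2$, and the image of $f_{m_1,m_2}$ is the closure of $D_{\rm R}$.

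The paper realizes $X$ differently. It takes the graph of $x_2=\frac{1-t}{2}(a_{-1,1}x_1+a_{-1,2})+\frac{1+t}{2}(a_{1,1}x_1+a_{1,2})$ over (radius-$a$ sphere in $(x_1,y_1)$) $\times$ (unit sphere in $(t,y_2)$), keeping $t$ as a coordinate. Hence it needs $\mathbb{R}^{m+3}$, and $\mathbb{R}^{m+2}$ only for rectangles. It checks regularity case by case with the implicit function theorem. It never writes $c^{\prime,{\rm P}}$ down: it appeals to Proposition \ref{prop:2} and asserts that at $x_1=0$ the gradient lies in the $(x_1,x_2)$-plane, giving $1/(1+\lambda(t)^2)$ there, where $\lambda(t)=\frac{1-t}{2}a_{-1,1}+\frac{1+t}{2}a_{1,1}$.

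Your map agrees with the paper's up to reparametrization ($s=(1+t)/2$), but the induced metrics differ, so the two canonical derivatives are different functions. Your Gram-matrix computation closes at once. Put $\ell=\alpha x_1+\beta$, $u=\gamma x_1+\delta$, $W=u-\ell$, $B=\gamma(x_2-\ell)-\alpha(u-x_2)$. On $X$ one has $|n_1|^2=(b-a)^2$, $|n_2|^2=B^2+W^2$ and $n_1\cdot n_2=(a+b-2x_1)B$, which give
\[
c^{\prime,{\rm P}}=\frac{\rho}{1+\rho\sigma^2},\qquad \rho=\frac{4(x_1-a)(b-x_1)}{(b-a)^2},\qquad \sigma=\frac{B}{W}=\frac{\gamma-\alpha}{2}+\frac{\gamma+\alpha}{2}\tau ,
\]
where $\tau=2s-1$ is the height on the $S^{m_2}$-factor.

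So off $\{x_1=a,b\}$, $1/c^{\prime,{\rm P}}=1/\rho+\sigma^2$ is a sum of a function on $S^{m_1}$ and a function on $S^{m_2}$. Proposition \ref{prop:2} then gives the whole critical set uniformly, with no sign analysis on $D_{\rm R}$:
the two copies of $S^{m_2}$ over $x_1=a,b$, at value $0$;
two copies of $S^{m_1-1}$ over the midpoints of the slanted edges, at values $\frac{1}{1+\alpha^2}$ and $\frac{1}{1+\gamma^2}$, with index $1$ or $m_2+1$ according as $\alpha(\alpha+\gamma)$, resp. $\gamma(\alpha+\gamma)$, is positive or negative;
a copy of $S^{m_1-1}\times S^{m_2-1}$ at value $1$ with index $2$, exactly when $\alpha\gamma>0$;
when $\alpha+\gamma=0$, the whole slice $\{x_1=\frac{a+b}{2}\}\cong S^{m_1-1}\times S^{m_2}$.
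Reeb digraphs and level sets then follow as you describe.

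Note that the case split is reversed relative to Theorem \ref{thm:3}. In your embedding, parallelograms carry the interior maximum and isosceles trapezoids behave like rectangles. So state your results for your embedding rather than trying to match Theorems \ref{thm:3} and \ref{thm:4}.

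The paper's route makes $X\cong S^{m_1}\times S^{m_2}$ immediate. Yours gives an exact formula, saves one ambient dimension, and avoids the paper's gradient claim. That claim holds for the paper's embedding only where $t=\pm1$ or $\lambda(t)=0$. For example, take $x_2=x_1+t$ with $m_1=m_2=1$. In coordinates $(x_1,x_2,t,y_1,y_2)$, the tangent plane at $x_1=t=0$ is spanned by $(1,1,0,0,0)$ and $(0,1,1,0,0)$. This gives $c^{\prime,{\rm P}}=2/3$ there, not the value $1/2$ of Theorem \ref{thm:3} (\ref{thm:3.1.1}).
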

\subsection{The content of the present paper.}
\label{subsec:1.3}
The next section is for additional preliminaries and some terminologies, notions, and notation we have presented in the present section or ones we have not presented rigorously. We have presented some rigorously and some roughly in this section and roughly presented exposition is rigorously presented in the second section. This is no problem. The third section is for our main result. 

Theorem \ref{thm:0} is rigorously presented again, as Theorem \ref{thm:3} and \ref{thm:4}, with their proof.  Before presenting the result, we also review related previous studies on reconstruction of nice real algebraic maps onto regions in ${\mathbb{R}}^2$ of the author, in a self-contained way, and we present Theorems \ref{thm:1} and \ref{thm:2}. In previous preprints such as \cite{kitazawa8}, essentially more general cases have been studied and our present cases are specific cases. We do not assume non-trivial knowledge related to these previous studies.   
\section{Additional preliminaries.}
\label{sec:2}
\subsection{Elementary terminologies, notions, and notation.}
\label{subsec:2.1}
For a topological space $X$ we can define the ({\it topological}) {\it dimension} as a non-negative integer uniquely, we use $\dim X$ for this. Topological manifolds and CW complexes are of this class. 

For a subspace $Y$ in a topological space $X$, we use ${\overline{Y}}^X$ for its closure there.

For a differentiable manifold $X$ (of the class $C^r$, with "$C^{\infty}$" being for the smooth cases), we use $T_p X$ for the tangent space of $X$ at $p \in X$. This is a real vector space of dimension $\dim X$ and we can topologize $TX={\bigcup}_{p \in X} T_p X$ and make it a differentiable manifold (resp. of the class $C^r$) canonically. More precisely, $TX$ is the tangent bundle of $X$. It is a real vector bundle over $X$ and can be topologized and made a differentiable manifold of dimension $2\dim X$ (of the class $C^r$) via local transformations associated canonically with (the so-called Jacobi) matrices ${(\frac{\partial x_{j_1}}{\partial x_{j_2}})}_{1 \leq j_1 \leq \dim X, 1 \leq j_2 \leq \dim X}$ with local coordinates.

For a differentiable map $c:X \rightarrow Y$ between differentiable manifolds $X$ and $Y$, the differential ${dc}:TX \rightarrow TY$ is a bundle map and $p \in X$ is a {\it singular} point of $c$ if the rank of the linear map ${dc}_p:T_p X \rightarrow T_{c(p)} Y$ defined at $p$ is smaller than the minimum of $\{\dim X,\dim Y\}$. We use $S(c)$ for the set of all singular points of the differentiable map $c$, and we also call this the {\it singular set} of $c$. We may use "{\it critical}" instead of "singular" if $\dim Y \leq 1$.

A smooth map which is also a homeomorphism and which has no singular point is a {\it diffeomorphism}. 
For a diffeomorphism between a manifold $X$ onto itself, we call it a {\it diffeomorphism} on $X$.
In all smooth manifolds, we can define the equivalence relation by the existence of diffeomorphisms. We can say two manifolds are {\it diffeomorphic} if they belong to the same equivalence class.

We use $0:=(0, \ldots) \in {\mathbb{R}}^k$ for the origin of ${\mathbb{R}}^k$.

An {\it affine subspace} of the {\it $k$-dimensional real affine space} ${\mathbb{R}}^k$ is the space represented as the zero set of a real polynomial map each component of which is of degree $0$ or $1$. An {\it affine isomorphism} between an affine subspace of ${\mathbb{R}}^{m_1}$ and an affine subspace of ${\mathbb{R}}^{m_2}$ is a diffeomorphism which is obtained by restricting some map obtained by the composition of a linear map from ${\mathbb{R}}^{m_1}$ to ${\mathbb{R}}^{m_2}$ with a parallel transformation. This defines an equivalence relation on the family of all affine subspaces (of some real affine spaces).

Consider a case $X \subset {\mathbb{R}}^{m_0}$ of an $m$-dimensional smooth submanifold $X$ of ${\mathbb{R}}^{m_0}$ with no boundary and with $m<m_0$ and a union of connected components of the zero set of a smooth map $f_{X,{\mathbb{R}}^{m_0},{\mathbb{R}}^{m-m_0}}:{\mathbb{R}}^{m_0} \rightarrow {\mathbb{R}}^{m_0-m}$ with $f_{X,{\mathbb{R}}^{m_0},{\mathbb{R}}^{m_0-m}} {\mid}_{X}$ containing no singular point. If $f_{X,{\mathbb{R}}^{m_0},{\mathbb{R}}^{m_0-m}}:{\mathbb{R}}^{m_0} \rightarrow {\mathbb{R}}^{m_0-m}$ is a real polynomial, then $X$ is said to be {\it real algebraic}. A {\it real algebraic} map means the composition of the canonical embedding into ${\mathbb{R}}^{m_0}$ with the canonical projection ${\pi}_{m_0,k_1}$.

\subsection{Morse-Bott functions.}
\label{subsec:2.2}
\begin{Def}
	\label{def:1}
A smooth function $c:X \rightarrow Y$ with $Y:=\mathbb{R}, S^1$ on a Riemannian manifold satisfying the following is {\it Morse-Bott}.
\begin{itemize}
\item It contains no critical point on the boundary $\partial X \subset X$ of $X$.
\item The critical set $S(c)$ is a disjoint union of smooth and connected submanifolds $C_{\lambda}(c)$ with no boundary each of which is labeled by $\lambda \in \Lambda$.
\item For each point $p \in C_{\lambda}(c)$, we can have the normal vector space $N_{p,C_{\lambda}(c)} \subset T_p X$ uniquely, as the space of all elements in $T_p X$ orthogonal to any element $v_{p,C_{\lambda}} \in T_p C_{\lambda}$. If we restrict the so-called {\it Hessian}, a bilinear form on $T_p X$ defined by the symmetric matrix ${(\frac{{\partial}^2 c}{\partial x_{j_1} \partial x_{j_2}})}_{1 \leq j_1 \leq \dim X, 1 \leq j_2 \leq \dim X}$ canonically, to $N_{p,C_{\lambda}(c)} \subset T_p X$, then it is non-degenerate for some local coordinate. 
\end{itemize}
This is also {\it Morse} if and only if the connected submanifolds are all discrete points in $S(c)$.
\end{Def}

\begin{Prop}
\label{prop:1}
	In Definition \ref{def:1}, for each $p \in C_{\lambda}(c)$ above, the sign $i(c,p)=i(c,\lambda)$ of the Hessian is defined uniquely, as the number of negative components in the uniquely and canonically defined diagonal matrix of degree $\dim X-\dim C_{\lambda}(c)$. We call this the {\it index} of $p$ and the index of $C_{\lambda}(c)$ for $c$.
\end{Prop}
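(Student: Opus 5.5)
The plan is to reduce the claim to Sylvester's law of inertia, applied to the restricted Hessian on the normal space, and then to show that the count of negative eigenvalues does not depend on the choices made: the chart, the point $p$ within $C_{\lambda}(c)$, and the identification of the normal space.

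\emph{Step 1: a well-defined quadratic form at each point.} Fix $p \in C_{\lambda}(c)$. Since $dc_p=0$, the second derivative of $c$ at $p$ is an intrinsic symmetric bilinear form $H_p$ on $T_p X$. Under a change of local coordinates $x=\phi(y)$, the chain rule gives
\[
\frac{\partial^2 (c\circ \phi)}{\partial y_{i}\partial y_{j}}(p)={\Sigma}_{k,l}\frac{\partial x_k}{\partial y_i}\frac{\partial x_l}{\partial y_j}\frac{\partial^2 c}{\partial x_k \partial x_l}(p)+{\Sigma}_{k}\frac{\partial^2 x_k}{\partial y_i \partial y_j}\frac{\partial c}{\partial x_k}(p).
\]
The last sum vanishes at a critical point. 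So the Hessian matrices in two charts are congruent via the Jacobi matrix, and $H_p$ is a well-defined form on $T_p X$.

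\emph{Step 2: reduce to the normal space and apply Sylvester.} Because $c$ is constant on the connected submanifold $C_{\lambda}(c)$ and $dc$ vanishes along it, $H_p(v,w)=0$ whenever $v \in T_p C_{\lambda}(c)$. To see this, extend $v$ to a vector field tangent to $C_{\lambda}(c)$; then $H_p(v,w)=w(v c)(p)$, and $vc \equiv 0$ near $p$ along the critical set, with $d(vc)_p=0$ since $dc$ vanishes on $C_{\lambda}(c)$. Hence $H_p$ descends to the quotient $T_p X/T_p C_{\lambda}(c)$. This quotient is identified with $N_{p,C_{\lambda}(c)}$ by the Riemannian metric, but the index does not depend on that identification. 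By the non-degeneracy assumption in Definition \ref{def:1}, the induced form is non-degenerate of rank $\dim X-\dim C_{\lambda}(c)$. Sylvester's law of inertia then says that any diagonalization of it by congruence has the same number of negative entries. This gives a well-defined $i(c,p)$, and it is the number of negative entries of the canonical diagonal form of size $\dim X-\dim C_{\lambda}(c)$.

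\emph{Step 3: independence of $p$ within $C_{\lambda}(c)$, which I expect to be the main obstacle.} The forms $H_p$ on the normal bundle of $C_{\lambda}(c)$ vary continuously in $p$ and remain non-degenerate. In a local trivialization of the normal bundle, the eigenvalues of the representing symmetric matrices depend continuously on $p$ and never cross zero, so the number of negative eigenvalues is locally constant. Since $C_{\lambda}(c)$ is connected, this number is constant on it, and $i(c,\lambda):=i(c,p)$ is well defined.

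The delicate point is the first half of Step 2, namely that the Hessian truly vanishes in the tangent directions of $C_{\lambda}(c)$. This uses that $C_{\lambda}(c)$ lies in the critical set, not merely in a level set. I would verify it via the derivative of the gradient field, or equivalently of $dc$ viewed as a section of $T^{*}X$, along $C_{\lambda}(c)$, where it vanishes identically.
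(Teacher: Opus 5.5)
The paper gives no proof of this proposition; it is stated as a standard fact that rests on Sylvester's law of inertia. Your argument supplies that standard justification and is correct.
\begin{itemize}
\item Step 1 makes the Hessian at a critical point an intrinsic form. This makes the paper's ``for some local coordinate'' harmless.
\item Step 2 passes to $T_pX/T_pC_{\lambda}(c)$, which removes any dependence on the metric-defined normal space $N_{p,C_{\lambda}(c)}$.
\item Step 3 gives the independence from $p$ that the notation $i(c,\lambda)$ needs and that the paper asserts without comment.
\end{itemize}

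The one place where your write-up is looser than it should be is the justification in Step 2. As written, ``$d(Vc)_p=0$ since $dc$ vanishes on $C_{\lambda}(c)$'' is essentially a restatement of the claim. Indeed $d(Vc)_p(w)=H_p(v,w)$ at a critical point. Moreover, $Vc$ vanishing on $C_{\lambda}(c)$ only controls derivatives of $Vc$ in directions tangent to $C_{\lambda}(c)$, not in the transverse directions you need.

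Reverse the roles instead. Let $W$ be any extension of $w$. Then $H_p(v,w)=v(Wc)(p)$, because $dc_p=0$. Since $Wc=dc(W)$ vanishes identically on $C_{\lambda}(c)$, its derivative along $v\in T_pC_{\lambda}(c)$ is $0$. In coordinates: each $\partial c/\partial x_l$ vanishes on $C_{\lambda}(c)$, so $\sum_{k} v_k \frac{\partial^2 c}{\partial x_k \partial x_l}(p)=0$ for every $l$. This uses the symmetry of mixed partials, which your version silently skips.

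This is the same mechanism as your closing remark about differentiating the section $dc$ of $T^{*}X$ along $C_{\lambda}(c)$. So the fix is already in your proposal; just make it the stated argument rather than the circular one.
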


\subsection{Reeb spaces (graphs) of smooth real-valued functions.}
\label{subsec:2.3}
In the present paper, we assume fundamental knowledge of graphs (and so-called {\it digraphs}). They are $0$- or $1$-dimensional CW complex which is locally finite and whose closure of $1$-cells (in the graphs) are homeomorphic to $D^1$.
$1$-cells (resp $0$-cells) are called {\it edges} (resp. {\it vertices}) of the graphs.

Hereafter, a preimage $c^{-1}(y)$ of a real-valued function $c$ is a {\it level set} of $c$ and if the set of the domain is a topological space, then each connected component of it is a {\it contour} of $c$. 
The {\it Reeb space} ({\it Reeb graph}) $R_c:=X/{\sim}_c$ of a smooth function $c:X \rightarrow \mathbb{R}$ is the quotient space of the manifold $X$, with the following equivalence relation ${\sim}_c$ on $X$. We can define this by $p_1 {\sim}_c p_2$ if and only if $p_1$ and $p_2$ are in a same contour of $c$. We have the quotient map $q_c:X \rightarrow R_c$ with the unique continuous map $\bar{c}:R_c \rightarrow \mathbb{R}$ yielding $c=\bar{c} \circ q_c$. In certain nice situations, $\dim R_c$ is defined to be $0 \leq \dim R_c \leq 1$, and $R_c$ is homotopic or homeomorphic to a graph. In a certain more specific case, we make this a graph by defining its vertices as points $v$ with ${q_c}^{-1}(v)$ having some critical points of $c$. 
Here, the contour ${q_c}^{-1}(v)$ having some critical points of $c$ is said to be a {\it critical} contour.
This is the {\it Reeb graph} of $c$ and we can orient each edge $e$ of it, which departs from a vertex $v_{e,1}$ of it and enter another vertex $v_{e,2}$ of it. This is a digraph and we call it the {\it Reeb digraph} $\overrightarrow{R_c}$ of $c$.

 For smooth real-valued functions $c$ on closed manifolds with $c(S(c))$ being finite, $R_c$ are the Reeb graphs of $c$, due to \cite[Theorem 3.1]{saeki2}. For related general topological theory of Reeb spaces, see \cite{gelbukh4, saeki3} and the survey preprint \cite{gelbukh7} for example, in addition. We do not need related general theory here.  
 
Specific cases for Reeb spaces are first discussed in \cite{reeb} and see also \cite{izar}. They are for Morse functions on compact manifolds. For Morse-Bott functions on compact manifolds, see \cite{martinezalfaromezasarmientooliveira} for example.
 \section{On our main result.}
\label{sec:3}
 \subsection{The canonical 1st derivative $c^{\prime,{\rm P}}$ of a height function $c:={\pi}_{m_0,1}:X \rightarrow \mathbb{R}$ on an $m$-dimensional smooth submanifold $X$ with no boundary of ${\mathbb{R}}^{m_0}$.}
 \label{subsec:3.1}
We review the {\it canonical 1st derivative $c^{\prime,{\rm P}}$} of a height function $c:={\pi}_{m_0,1}:X \rightarrow \mathbb{R}$ on an $m$-dimensional smooth submanifold $X$ with no boundary of ${\mathbb{R}}^{m_0}$, respecting the preprint \cite{kitazawa8}, in a self-contained way. Note that the phrase "canonical" and the notation "$c^{\prime,{\rm P}}$" are first presented, here.
 
For each point $p \in X$ which is not a critical point of $c$, we can have the unique tangent vector $v_{p,c,+} \in T_pX \subset T_p {\mathbb{R}}^{m_0}$ as follows.
\begin{itemize}
\item This is orthogonal to all tangent vectors of $T_pX$ mapped to the zero vector of $T_{c(p)} \mathbb{R}$ by the differential $d{\pi}_{m_0,1}$.
\item Its length is $1$.
\item It is is mapped to a tangent vector of $T_{c(p)} \mathbb{R}$  by $d{\pi}_{m_0,1}$ being not the zero vector, and which is oriented naturally in the positive direction.
\end{itemize}
By composing this correspondence with the differential $d{{\rm id}_{\mathbb{R}}}$ of the identity function ${\rm id}_{\mathbb{R}}$ on $\mathbb{R}$ and the square, we have a smooth real-valued function on $X-S(c)$. Due to fundamental theory from Riemannian geometry especially so-called ({\it geodesic}) ({\it positive}) {\it gradient flow}, by assigning $0$ to each critical point of $c$, we have a non-negative smooth function $c^{\prime,{\rm P}}:X \rightarrow \mathbb{R}$ with its zero set being $S(c)$ and with value at each point must be smaller than or equal to $1$. This is the {\it canonical 1st derivative $c^{\prime,{\rm P}}$}.

\subsection{The canonical projection ${\pi}_{m+1,1} {\mid}_{S^m}$ revisited.}
\label{subsec:3.2}
We review fundamental properties of ${\pi}_{m+1,1} {\mid}_{S^m}$ important in our study.

 First, the Reeb digraph of ${\pi}_{m+1,1} {\mid}_{S^m}$ ($m \geq 2$) is a digraph with exactly one edge and two vertices. In addition, that of ${\pi}_{2,1}$ is a digraph with exactly two edges departing from a vertex and entering another vertex, and exactly two vertices, which are presented here.  

We discuss ${{\pi}_{m+1,1} {\mid}_{S^m}}^{\prime,{\rm P}}$, in a self-contained way, respecting \cite{kitazawa8}.
Here we use {\it great circles}, passing through $(\pm 1,0) \in \mathbb{R} \times {\mathbb{R}}^m$ and parameterized by $\{0\} \times S^{m-1} \subset S^m$ (with nice symmetry). Each great circle is naturally regarded to be $S^1$ and each point is represented as $(\cos \theta,\sin \theta)$
 by the naturally defined angle $\theta$. For each point, a tangent vector is represented by the form $(-\sin \theta,\cos \theta)$. From this,  ${{\pi}_{m+1,1} {\mid}_{S^m}}^{\prime,{\rm P}}$ is represented as a function mapping each point represented by the pair of a point in  $p_{S^{m-1}} \in \{0\} \times S^{m-1} \subset S^m$ and $(\cos \theta,\sin \theta)$ to ${\sin}^2 \theta$. The 2nd derivative
$\frac{{\partial}^2 ({\sin}^2 \theta)}{\partial \theta \partial \theta}$
 of the function $\Theta(p_{S^{m-1}},\theta)={\sin}^2 \theta$  is $2\cos (2\theta)$. Due to this calculation and the symmetry by  $\{0\} \times S^{m-1} \subset S^m$, ${{\pi}_{m+1,1} {\mid}_{S^m}}^{\prime,{\rm P}}$ is shown to be Morse-Bott.

\begin{Prop}
\label{prop:2}
${{\pi}_{m+1,1} {\mid}_{S^m}}^{\prime,{\rm P}}$ is Morse-Bott. Its critical set is the disjoint union of ${{{\pi}_{m+1,1} {\mid}_{S^m}}^{\prime,{\rm P}}}^{-1}(0)=\{(-1,0 \ldots),(1,0 \ldots)\}$ and ${{{\pi}_{m+1,1} {\mid}_{S^m}}^{\prime,{\rm P}}}^{-1}(1)=\{0\} \times S^{m-1}$. The index of $(\pm 1,0 \ldots)$ for the function ${{\pi}_{m+1,1} {\mid}_{S^m}}^{\prime,{\rm P}}$ is $0$ with each point there being isolated in $S^m$, and  the index of each connected component of $\{0\} \times S^{m-1}$ for the function ${{\pi}_{m+1,1} {\mid}_{S^m}}^{\prime,{\rm P}}$  is $1$.

In the case $m \geq 2$, its Reeb digraph $\overrightarrow{R_{{{\pi}_{m+1,1} {\mid}_{S^m}}^{\prime,{\rm P}}}}$  is with exactly three vertices $v_{0,1}$, $v_{0,2}$, and $v_1$, and two edges $e_{0,j}$, each of which departs from $v_{0,j}$ and enters $v_1$.

In the case $m=1$, its Reeb digraph $\overrightarrow{R_{{{\pi}_{m+1,1} {\mid}_{S^m}}^{\prime,{\rm P}}}}$  is with exactly four vertices $v_{0,1}$, $v_{0,2}$, $v_{1,1}$, and $v_{1,2}$, and exactly four edges $e_{j_1,j_2}$, each of which departs from $v_{0,j_1}$ and enters $v_{1,j_2}$. For this case $m=1$, we give our related remark on  \cite{kitazawa5}. The induced function $\bar{{{\pi}_{2,1} {\mid}_{S^1}}^{\prime,{\rm P}}}$, the unique function with the relation ${{\pi}_{2,1} {\mid}_{S^1}}^{\prime,{\rm P}}=\bar{{{\pi}_{2,1} {\mid}_{S^1}}^{\prime,{\rm P}}} \circ q_{{{\pi}_{2,1} {\mid}_{S^1}}^{\prime,{\rm P}}}$, cannot be represented as the composition of an embedding into ${\mathbb{R}}^2$ with  ${\pi}_{2,1}$.
	\end{Prop}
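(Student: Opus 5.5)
Writing $g:={{\pi}_{m+1,1} {\mid}_{S^m}}^{\prime,{\rm P}}$, I would first compute $g$ explicitly and then read off every claim from the formula. At a non-critical point $p=(x_1,x')\in S^m$, the unit positive gradient vector $v_{p,c,+}$ is the normalized tangential projection of $e_1=(1,0,\ldots,0)$, namely $(e_1-x_1p)/\sqrt{1-x_1^2}$. Its image under $d{\pi}_{m+1,1}$ is $\sqrt{1-x_1^2}$, so squaring gives
\[
g(x_1,x')=1-x_1^2 .
\]
This formula also holds at $(\pm 1,0,\ldots)$, where both sides vanish. In the great circle parametrization of the preceding exposition, where $x_1=\cos\theta$, this is exactly ${\sin}^2\theta$. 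Hence $g$ is the restriction to $S^m$ of the polynomial $1-x_1^2$, and smoothness is automatic.

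Next I would find the critical set. The restricted function $-x_1^2$ has gradient $-2x_1e_1$, and its tangential component vanishes exactly where $x_1=0$ or $p=\pm e_1$. So $S(g)=\{\pm e_1\}\sqcup(\{0\}\times S^{m-1})$, with $g=0$ on the first set and $g=1$ on the second, as claimed. For the Morse--Bott condition and the indices I would use the coordinates $(p_{S^{m-1}},\theta)$. The function depends only on $\theta$, and its second derivative $2\cos(2\theta)$ equals $2$ at $\theta=0,\pi$ and $-2$ at $\theta=\pm\pi/2$.
\begin{itemize}
\item At $\pm e_1$, I would use the local chart $x'\mapsto(\pm\sqrt{1-|x'|^2},x')$, in which $g=|x'|^2$. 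This is a nondegenerate minimum, so the points are isolated with index $0$.
\item Along the equator, the normal direction is the $\theta$ direction and the Hessian there is $-2$. This gives a nondegenerate normal Hessian of index $1$.
\end{itemize}
Since $g$ is a maximum along the equator, the index $1$ is also consistent with the equator being a maximum set of codimension $1$.

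The Reeb digraph comes from the level sets. For $0<t<1$, $g^{-1}(t)=\{x_1=\pm\sqrt{1-t}\}$, which is two parallel $(m-1)$-spheres. Near the top level, $g^{-1}(1)=\{0\}\times S^{m-1}$. If $m\geq 2$, the equator $S^{m-1}$ is connected, so it gives one vertex $v_1$. The two caps $\{x_1>0\}$ and $\{x_1<0\}$ each contribute an edge: from $v_{0,1}$ (the point $e_1$) to $v_1$, and from $v_{0,2}$ (the point $-e_1$) to $v_1$. Each edge is oriented by increasing $g$. If $m=1$, the equator $S^0$ consists of two points $(0,\pm1)$, giving two vertices $v_{1,1},v_{1,2}$. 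The level set $g^{-1}(t)$ for $0<t<1$ is then four points, one in each open quarter arc, and each quarter arc runs from one of $\pm e_1$ to one of $(0,\pm1)$. This gives the four edges $e_{j_1,j_2}$.

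The main obstacle I expect is the final non-embeddability claim. My plan is to suppose $\bar g=\pi_{2,1}\circ\iota$ for an embedding $\iota:R_g\to\mathbb{R}^2$. The graph $R_g$ is a $4$-cycle $v_{0,1},v_{1,1},v_{0,2},v_{1,2}$, with $\bar g$ equal to $0$ at the $v_{0,j}$ and $1$ at the $v_{1,j}$, and strictly monotone on every edge. Since $\bar g$ is strictly monotone on each edge, every edge is the graph of a continuous function over $[0,1]$, with the coordinate swapped: $y=\phi_{j_1j_2}(x)$. The two edges leaving $v_{0,1}$ start at the same height $\iota(v_{0,1})_2$ at $x=0$, and the two edges leaving $v_{0,2}$ start at a different height. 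I would then look at the heights at $x=1$, namely $y_1=\iota(v_{1,1})_2$ and $y_2=\iota(v_{1,2})_2$. Of the four functions, $\phi_{11},\phi_{21}$ end at $y_1$ and $\phi_{12},\phi_{22}$ end at $y_2$. Embeddedness forces the functions to be pairwise non-crossing on $(0,1)$, so their vertical order is constant on the open interval. Comparing that order near $x=0$ with the order near $x=1$ should produce a pair that must cross, by the intermediate value theorem. This contradicts injectivity of $\iota$, and the whole difficulty is in carrying out this order comparison cleanly.
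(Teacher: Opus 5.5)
Your proposal is correct and follows essentially the paper's route. The paper likewise works in great-circle coordinates, where the function is $\sin^2\theta$ with second derivative $2\cos(2\theta)$; your closed form $1-x_1^2$ and the pole chart giving $g=|x'|^2$ just make explicit what the paper leaves to symmetry.

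The Reeb digraphs and the $m=1$ non-embeddability are only asserted in the paper, and your order comparison finishes immediately. Of the pairs $(\phi_{11},\phi_{22})$ and $(\phi_{12},\phi_{21})$, exactly one has differences of opposite sign at $x=0$ and $x=1$, so the intermediate value theorem forces a crossing in $(0,1)$, contradicting injectivity.
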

We present $\overrightarrow{R_{{{\pi}_{m+1,1} {\mid}_{S^m}}^{\prime,{\rm P}}}}$ {\rm (}$m \geq 2${\rm }) and $\overrightarrow{R_{{{\pi}_{2,1} {\mid}_{S^1}}^{\prime,{\rm P}}}}$  in Figure \ref{fig:1}.
\begin{figure}
	\includegraphics[width=40mm, height=40mm]{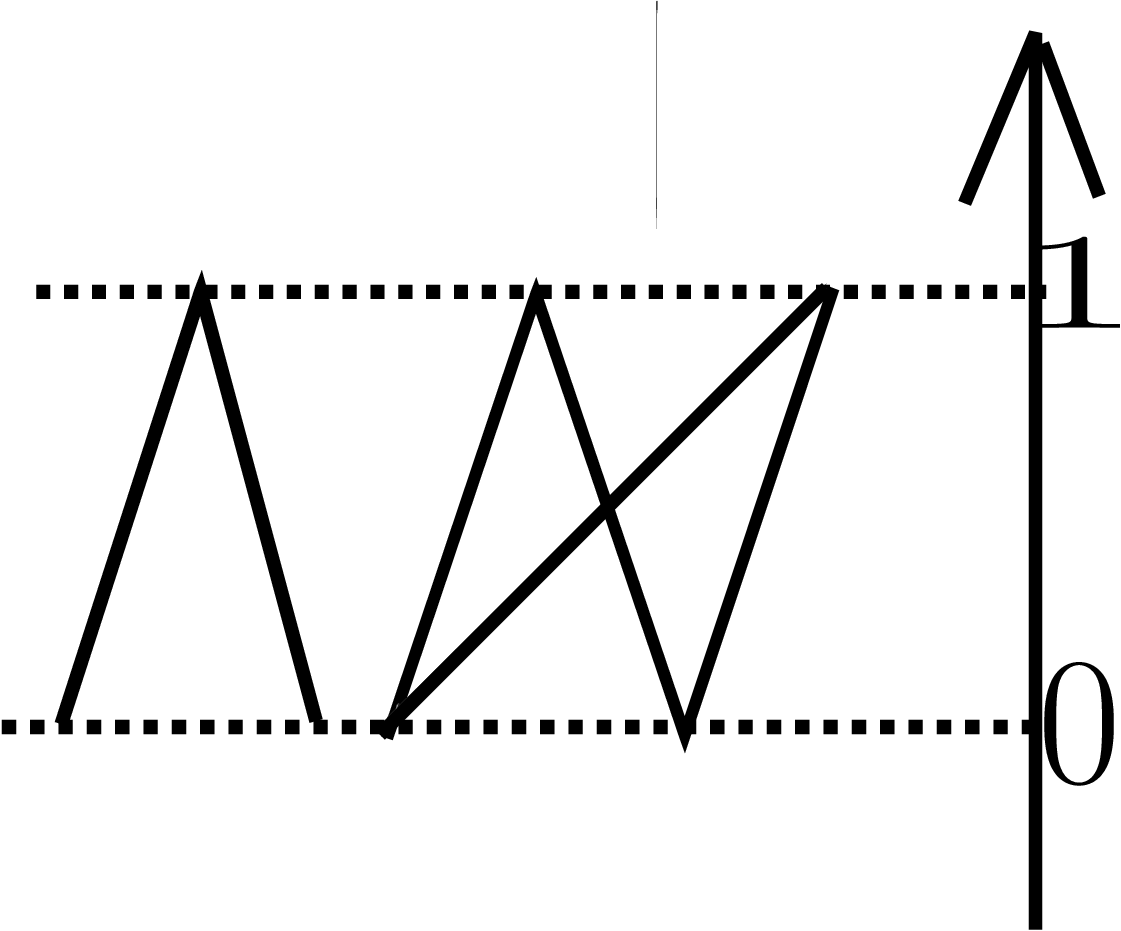}
	\caption{The left figure shows $\overrightarrow{R_{{{\pi}_{m+1,1} {\mid}_{S^m}}^{\prime,{\rm P}}}}$ ($m \geq 2$). The right figure shows $\overrightarrow{R_{{{\pi}_{2,1} {\mid}_{S^1}}^{\prime,{\rm P}}}}$ .}
	\label{fig:1}
\end{figure}
\subsection{Natural real algebraic maps onto quadrilateral in ${\mathbb{R}}^2$}
\label{subsec:3.3}
A {\it quadrilateral} means a non-empty open, connected and bounded subset of ${\mathbb{R}}^2$ surrounded by $4$-distinct straight lines in (, or $1$-dimensional affine subspaces of) ${\mathbb{R}}^2$, where in its closure of ${\mathbb{R}}^2$, distinct two lines of these four intersect in a one-element set, or do not intersect.

Theorem \ref{thm:1} is important and an essentially more general case is considered in \cite{kitazawa8}, first. We review its proof.

\begin{Thm}
\label{thm:1}
Let $a>0$.
	Let $S_{a,-}:=\{(-a,x_2) \mid x_2 \in \mathbb{R}\}$ and $S_{a,+}:=\{(a,x_2) \mid x_2 \in \mathbb{R}\}$.
Let $S_{{\rm l},-1}:=\{(x_1,x_2) \mid x_2=a_{-1,1}x_1+a_{-1,2}\}$, $S_{{\rm l},1}=\{(x_1,x_2) \mid x_2=a_{1,1}x_1+a_{1,2}\}$ and assume that $a_{-1,1}x_1+a_{-1,2}<a_{1,1}x_1+a_{1,2}$ holds for $-a \leq x \leq a$.

We can argue as follows.
\begin{enumerate}
\item \label{thm:1.1}
We have the unique quadrilateral $D_{S_a,a_{-1},a_{1}}:=\{(x_1,x_2) \mid -a \leq x_1 \leq a, a_{-1,1}x_1+a_{-1,2}<a_{1,1}x_1+a_{1,2} \} \subset {\mathbb{R}}^2$.
\item \label{thm:1.2}
 A natural smooth family $S_{{\rm l},t}:=\{(x_1,x_2) \mid x_2=(-\frac{1}{2}t+\frac{1}{2})(a_{-1,1}x_1+a_{-1,2})+(\frac{1}{2}t+\frac{1}{2})(a_{1,1}x_1+a_{1,2})\}$ of straight lines parameterized by $-1 \leq t \leq 1$ is obtained. If we restrict each straight line of the family to $\{x_1 \mid -a \leq x_1 \leq a\}$, then for distinct $t=t_1$ and $t=t_2$, the resulting segments are disjoint. 
\item \label{thm:1.3}
 For positive integers $m_1$ and $m_2$ and $m:=m_1+m_2$, we have an $m$-dimensional real algebraic manifold $X_{D_{S_a,a_{-1},a_{1}},m_1,m_2}:=\{(x_1,x_2,t,{(y_{1,j_1})}_{j_1=1}^{m_1},{(y_{2,j_2})}_{j_2=1}^{m_2}) \mid a^2-{x_1}^2-{\Sigma}_{j_1}^{m_1} {y_{1,j_1}}^2=0, 1-t^2-{\Sigma}_{j_2=1}^{m_2} {y_{2,j_2}}^2=0, x_2=(-\frac{1}{2}t+\frac{1}{2})(a_{-1,1}x_1+a_{-1,2})+(\frac{1}{2}t+\frac{1}{2})(a_{1,1}x_1+a_{1,2})\} \subset {\mathbb{R}}^{m+3}$. This is mapped onto the closure ${\overline{D_{S_a,a_{-1},a_{1}}}}^{{\mathbb{R}}^2}$. 
\item \label{thm:1.4}
Furthermore, the restriction ${\pi}_{m+3,1} {\mid}_{X_{D_{S_a,a_{-1},a_{1}},m_1,m_2}}$ is a Morse-Bott function the image of whose singular set is $\{-a,a\}$ and the critical contour $({\pi}_{m+3,1} {\mid}_{X_{D_{S_a,a_{-1},a_{1}},m_1,m_2}})^{-1}(\pm a)$ of which is diffeomorphic to $S^{m_2}$. 
\end{enumerate}
\end{Thm}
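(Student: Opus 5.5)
Items (1) and (2) are elementary, so I dispose of them first. The quadrilateral is the open region bounded by $S_{a,-}$, $S_{a,+}$, $S_{{\rm l},-1}$ and $S_{{\rm l},1}$, namely
$$\{(x_1,x_2)\mid -a<x_1<a,\ a_{-1,1}x_1+a_{-1,2}<x_2<a_{1,1}x_1+a_{1,2}\}.$$
It is non-empty, open, connected and bounded because the two linear functions satisfy a strict inequality on $[-a,a]$. For (2), write $\ell_{\pm1}(x_1)$ for the two linear functions and $\ell_t:=(\frac{1-t}{2})\ell_{-1}+(\frac{1+t}{2})\ell_1$. Then $\ell_{t_2}(x_1)-\ell_{t_1}(x_1)=\frac{t_2-t_1}{2}(\ell_1(x_1)-\ell_{-1}(x_1))$, which is nonzero for $t_1\neq t_2$ and $|x_1|\leq a$. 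Hence the segments are disjoint, and $t\mapsto \ell_t(x_1)$ is an increasing bijection from $[-1,1]$ onto $[\ell_{-1}(x_1),\ell_1(x_1)]$.

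For (3), I regard $X:=X_{D_{S_a,a_{-1},a_{1}},m_1,m_2}$ as the zero set of the polynomial map
$$F=(a^2-x_1^2-\Sigma y_{1,j_1}^2,\ 1-t^2-\Sigma y_{2,j_2}^2,\ x_2-\ell_t(x_1)):\mathbb{R}^{m+3}\to\mathbb{R}^3.$$
I check that $dF$ has rank $3$ on $X$. The third component is the only one involving $x_2$, with coefficient $1$. The first two components involve disjoint sets of variables, and their gradients vanish only at the origin of the respective spaces, which do not meet $X$. So $X$ is a smooth real algebraic submanifold of dimension $m$. The last equation is the graph of a function of $(x_1,t)$, so $X$ is diffeomorphic to the product of the radius-$a$ sphere $\{x_1^2+\Sigma y_1^2=a^2\}\cong S^{m_1}$ and the unit sphere $S^{m_2}$ in $(t,y_2)$. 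Under this identification, ${\pi}_{m+3,2}\mid_X$ becomes $(x_1,t)\mapsto(x_1,\ell_t(x_1))$. The pair $(x_1,t)$ ranges over all of $[-a,a]\times[-1,1]$ since $m_1,m_2\geq1$, and by (2) the map $(x_1,t)\mapsto(x_1,\ell_t(x_1))$ is a homeomorphism of $[-a,a]\times[-1,1]$ onto the closure of the quadrilateral. This proves surjectivity onto the closure.

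For (4), I use the product structure: ${\pi}_{m+3,1}\mid_X$ corresponds to the height function $x_1$ on the first sphere factor, composed with the projection $S^{m_1}\times S^{m_2}\to S^{m_1}$. By the classical fact about ${\pi}_{m_1+1,1}\mid_{S^{m_1}}$ (scaled by $a$), this height function is Morse with critical points $x_1=\pm a$ and critical values $\pm a$. Pulling back along the projection, the critical set is $\{(\pm a,0)\}\times S^{m_2}$ and the Hessian in the normal directions coincides with the nondegenerate Hessian of the sphere height function. So the function is Morse-Bott with critical values exactly $\{-a,a\}$, and each preimage of $\pm a$ equals $\{(\pm a,0)\}\times S^{m_2}\cong S^{m_2}$, which is connected.

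The main care point is making the diffeomorphism $X\cong S^{m_1}\times S^{m_2}$ explicit via the graph coordinate $x_2$, so that the Riemannian normal Hessian condition of Definition \ref{def:1} can be checked. Since the graph map is only a diffeomorphism and not an isometry, nondegeneracy of the normal Hessian needs a short argument that the Morse-Bott property is independent of the chosen metric and coordinates. Definition \ref{def:1} already allows "some local coordinate", so this suffices.
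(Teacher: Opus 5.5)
Your proof is correct, but it takes a different route from the paper.

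The paper argues purely locally. It applies the implicit function theorem to the three defining polynomials at each point $p$ of $X_{D_{S_a,a_{-1},a_{1}},m_1,m_2}$, split into four cases according to where ${\pi}_{m+3,2}(p)$ lies: in the open quadrilateral, on $S_{a,\pm}$ away from the corners, on $S_{{\rm l},\pm 1}$ away from the corners, or at a corner. In each case it exhibits three variables with invertible Jacobian, for instance $y_{1,j_{1,0}},y_{2,j_{2,0}},t$ in the interior and $x_1,x_2,t$ at a corner. This yields smoothness together with charts adapted to ${\pi}_{m+3,2}{\mid}_{X}$ over each stratum of the closed quadrilateral, which is the kind of local information the paper relies on afterwards. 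However, items (1), (2), the surjectivity in (3) and the Morse--Bott claim in (4) are left essentially implicit there.

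You instead notice that the third equation is a graph $x_2=\ell_t(x_1)$ over $(x_1,t)$. Hence $X$ is globally the product of a radius-$a$ sphere and $S^{m_2}$. Under this identification, ${\pi}_{m+3,2}{\mid}_{X}$ is the product of the two height functions followed by $(x_1,t)\mapsto(x_1,\ell_t(x_1))$. That last map is even a diffeomorphism of $[-a,a]\times[-1,1]$ onto the closed quadrilateral, since its Jacobian determinant is $\frac{1}{2}(\ell_1-\ell_{-1})>0$.

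This one observation proves all four items. It also gives the diffeomorphism type $S^{m_1}\times S^{m_2}$ asserted in Theorem \ref{thm:0}, and it reduces the Morse--Bott property to pulling back a Morse function along a product projection. Your remark that the normal-Hessian condition does not depend on the metric is what justifies passing through the non-isometric identification: the condition amounts to the kernel of the Hessian at $p$ being exactly $T_pC$. The paper's local charts can also be read off from your product description, so your argument is both shorter and more complete.
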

\begin{proof}
We review original proofs in a self-contained way.

We put $f_{{X_{D_{S_a,a_{-1},a_{1}},m_1,m_2}},1}:=a^2-{x_1}^2-{\Sigma}_{j_1}^{m_1} {y_{1,j_1}}^2$, $f_{{X_{D_{S_a,a_{-1},a_{1}},m_1,m_2}},2}:=1-t^2-{\Sigma}_{j_2=1}^{m_2} {y_{2,j_2}}^2$, and $f_{{X_{D_{S_a,a_{-1},a_{1}},m_1,m_2}},3}:=x_2-(-\frac{1}{2}t+\frac{1}{2})(a_{-1,1}x_1+a_{-1,2})-(\frac{1}{2}t+\frac{1}{2})(a_{1,1}x_1+a_{1,2})$. 

We apply implicit function theorem for these three defining functions for the manifold $X_{D_{S_a,a_{-1},a_{1}},m_1,m_2}$, at each point $(x_1,x_2,t,{(y_{1,j_1})}_{j_1=1}^{m_1},{(y_{2,j_2})}_{j_2=1}^{m_2}) \in X_{D_{S_a,a_{-1},a_{1}},m_1,m_2}$. \\
\ \\

Case 1-1 The case $(x_1,x_2) \in D_{S_a,a_{-1},a_{1}}$. \\

The value of the derivative $\frac{\partial f_{{X_{D_{S_a,a_{-1},a_{1}},m_1,m_2}},1}}{\partial y_{1,j_{1,0}}}$ is non-zero for some $y_{1,j_{1,0}}$ and those of the derivatives $\frac{\partial f_{{X_{D_{S_a,a_{-1},a_{1}},m_1,m_2}},1}}{\partial y_{2,j_{2}}}$ and $\frac{\partial f_{{X_{D_{S_a,a_{-1},a_{1}},m_1,m_2}},1}}{\partial t}$  are $0$. 

The value of the derivative $\frac{\partial f_{{X_{D_{S_a,a_{-1},a_{1}},m_1,m_2}},2}}{\partial y_{2,j_{2,0}}}$ is non-zero for some $y_{2,j_{2,0}}$ and those of the derivatives $\frac{\partial f_{{X_{D_{S_a,a_{-1},a_{1}},m_1,m_2}},2}}{\partial y_{1,j_{1}}}$ and $\frac{\partial f_{{X_{D_{S_a,a_{-1},a_{1}},m_1,m_2}},2}}{\partial t}$  are $0$. 

The value of the derivative $\frac{\partial f_{{X_{D_{S_a,a_{-1},a_{1}},m_1,m_2}},3}}{\partial t}$ is non-zero and those of the derivatives $\frac{\partial f_{{X_{D_{S_a,a_{-1},a_{1}},m_1,m_2}},3}}{\partial y_{i,j_{i}}}$  are $0$. 

We apply implicit function theorem for some $y_{1,j_{1,0}}$, some $y_{2,j_{2,0}}$, and $t$. \\

\ \\ 
Case 1-2 The case $(x_1,x_2) \in {\overline{D_{S_a,a_{-1},a_{1}}}}^{{\mathbb{R}}^2} \bigcap S_{a,+} \sqcup S_{a,-}$ with $(x_1,x_2) \notin  S_{{\rm l},-1} \sqcup S_{{\rm l},1}$. \\

The value of the derivative $\frac{\partial f_{{X_{D_{S_a,a_{-1},a_{1}},m_1,m_2}},1}}{\partial x_1}$ is non-zero and those of the derivatives $\frac{\partial f_{{X_{D_{S_a,a_{-1},a_{1}},m_1,m_2}},1}}{\partial y_{i,j_{i}}}$ and $\frac{\partial f_{{X_{D_{S_a,a_{-1},a_{1}},m_1,m_2}},1}}{\partial t}$  are $0$. 

The value of the derivative $\frac{\partial f_{{X_{D_{S_a,a_{-1},a_{1}},m_1,m_2}},2}}{\partial y_{2,j_{2,0}}}$ is non-zero for some $y_{2,j_{2,0}}$ and those of the derivatives $\frac{\partial f_{{X_{D_{S_a,a_{-1},a_{1}},m_1,m_2}},2}}{\partial x_1}$ and $\frac{\partial f_{{X_{D_{S_a,a_{-1},a_{1}},m_1,m_2}},2}}{\partial t}$  are $0$. 

The value of the derivative $\frac{\partial f_{{X_{D_{S_a,a_{-1},a_{1}},m_1,m_2}},3}}{\partial t}$ is non-zero and that of the derivative $\frac{\partial f_{{X_{D_{S_a,a_{-1},a_{1}},m_1,m_2}},3}}{\partial y_{i,j_{i}}}$  is $0$. 

We apply implicit function theorem for some  $y_{2,j_{2,0}}$, $x_1$, and $t$. \\

\ \\
Case 1-3  The case $(x_1,x_2) \in {\overline{D_{S_a,a_{-1},a_{1}}}}^{{\mathbb{R}}^2} \bigcap S_{{\rm l},-1} \sqcup S_{{\rm l},1}$ with $(x_1,x_2) \notin S_{a,+} \sqcup S_{a,-}$. \\

The value of the derivative $\frac{\partial f_{{X_{D_{S_a,a_{-1},a_{1}},m_1,m_2}},1}}{\partial y_{1,j_{1,0}}}$ is non-zero  for some $y_{1,j_{1,0}}$ and that of the derivative $\frac{\partial f_{{X_{D_{S_a,a_{-1},a_{1}},m_1,m_2}},1}}{\partial t}$  is $0$. 

The value of the derivative $\frac{\partial f_{{X_{D_{S_a,a_{-1},a_{1}},m_1,m_2}},2}}{\partial t}$ is non-zero and those of the derivatives $\frac{\partial f_{{X_{D_{S_a,a_{-1},a_{1}},m_1,m_2}},2}}{\partial y_{i,j_{i}}}$ and $\frac{\partial f_{{X_{D_{S_a,a_{-1},a_{1}},m_1,m_2}},2}}{\partial x_2}$ are $0$. 

The values of the derivatives $\frac{\partial f_{{X_{D_{S_a,a_{-1},a_{1}},m_1,m_2}},3}}{\partial x_2}$ and $\frac{\partial f_{{X_{D_{S_a,a_{-1},a_{1}},m_1,m_2}},3}}{\partial t}$ are non-zero and that of the derivative $\frac{\partial f_{{X_{D_{S_a,a_{-1},a_{1}},m_1,m_2}},3}}{\partial y_{i,j_{i}}}$ is $0$. 

We apply implicit function theorem for some  $y_{1,j_{1,0}}$, $x_2$, and $t$. \\

\ \\
Case 1-4 The remaining case, $(x_1,x_2) \in {\overline{D_{S_a,a_{-1},a_{1}}}}^{{\mathbb{R}}^2} \bigcap (S_{a,+} \sqcup S_{a,-}) \bigcap (S_{{\rm l},-1} \sqcup S_{{\rm l},1})$. \\

The value of the derivative $\frac{\partial f_{{X_{D_{S_a,a_{-1},a_{1}},m_1,m_2}},1}}{\partial x_1}$ is non-zero and those of the derivatives $\frac{\partial f_{{X_{D_{S_a,a_{-1},a_{1}},m_1,m_2}},1}}{\partial x_2}$  and $\frac{\partial f_{{X_{D_{S_a,a_{-1},a_{1}},m_1,m_2}},1}}{\partial t}$ are $0$. 

The value of the derivative $\frac{\partial f_{{X_{D_{S_a,a_{-1},a_{1}},m_1,m_2}},2}}{\partial t}$ is non-zero and those of the derivatives $\frac{\partial f_{{X_{D_{S_a,a_{-1},a_{1}},m_1,m_2}},2}}{\partial x_1}$ and $\frac{\partial f_{{X_{D_{S_a,a_{-1},a_{1}},m_1,m_2}},2}}{\partial x_2}$  are $0$. 

The values of the derivatives $\frac{\partial f_{{X_{D_{S_a,a_{-1},a_{1}},m_1,m_2}},3}}{\partial x_2}$ and $\frac{\partial f_{{X_{D_{S_a,a_{-1},a_{1}},m_1,m_2}},3}}{\partial t}$ are non-zero.

We apply implicit function theorem for $x_1$, $x_2$, and $t$. \\

This completes the proof.

\end{proof}
Note that in the original preprint, more general situations are considered. We do not discuss such situations.

For {\it rectangles}, or cases where the four lines are divided into pairs of mutually parallel lines, refer to a kind of related studies such as \cite{kobayashi, kobayashiyamamoto}. There local maps around a point $(\pm 1,\pm 1)$ in the corner are discussed. In other words, a situation as in Theorem \ref{thm:2} is considered around $(\pm a,\pm a_2)$, and in the orginal articles (, especially, \cite{kobayashiyamamoto},) deformations and visualizations of the local functions and maps are discussed via explicit calculations and geometric observations.
\begin{Thm}
\label{thm:2}
In Theorem \ref{thm:1}, consider the case $a_{-1,1}=a_{1,1}=0$ and $a_2:=-a_{-1,2}=a_{1,2}>0$. In this case, the statement {\rm (}\ref{thm:1.3}{\rm)} can be replaced by $X_{D_{S_a,a_{-1,1}},m_1,m_2}:=\{(x_1,x_2,{(y_{1,j_1})}_{j_1=1}^{m_1},{(y_{2,j_2})}_{j_2=1}^{m_2}) \mid a^2-{x_1}^2-{\Sigma}_{j_1}^{m_1} {y_{j_1}}^2=0, {a_{2}}^2-{x_2}^2-{\Sigma}_{j_2}^{m_2} {y_{j_2}}^2=0\} \subset {\mathbb{R}}^{m+2}$ and the corresponding remaining statements also hold.
\end{Thm}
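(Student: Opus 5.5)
In the rectangle case the defining equations split: the quadrilateral is the rectangle $D_{S_a,a_{-1,1}}=(-a,a)\times(-a_2,a_2)$, and $X_{D_{S_a,a_{-1,1}},m_1,m_2}$ is the product of the sphere of radius $a$ in the $(x_1,(y_{1,j_1}))$-space ${\mathbb{R}}^{m_1+1}$ with the sphere of radius $a_2$ in the $(x_2,(y_{2,j_2}))$-space ${\mathbb{R}}^{m_2+1}$. So the plan is to reduce everything to facts about a single sphere, either directly or by comparing with the $X_{D_{S_a,a_{-1},a_{1}},m_1,m_2}$ of Theorem \ref{thm:1}.

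\textbf{Step 1: reduction to Theorem \ref{thm:1}.} With $a_{-1,1}=a_{1,1}=0$ and $a_{\pm 1,2}=\pm a_2$, the third defining equation of Theorem \ref{thm:1} reads $x_2=a_2t$. The coordinate $t$ is therefore redundant and can be eliminated. Substituting $t=x_2/a_2$ into $1-t^2-{\Sigma}_{j_2} {y_{2,j_2}}^2=0$ and rescaling $y_{2,j_2}\mapsto a_2y_{2,j_2}$ gives ${a_2}^2-{x_2}^2-{\Sigma}_{j_2}{y_{2,j_2}}^2=0$. Hence the map forgetting $t$ and rescaling the $y_{2,j_2}$ is a linear isomorphism of ${\mathbb{R}}^{m+3}$ onto its image ${\mathbb{R}}^{m+2}$. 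It carries the manifold of Theorem \ref{thm:1} onto the new manifold and commutes with the projections to the $(x_1,x_2)$-plane. Statements (\ref{thm:1.1}), (\ref{thm:1.2}) and (\ref{thm:1.4}) therefore transfer immediately: the image is $\overline{D}$, the height function is Morse-Bott with critical values $\pm a$, and the critical contours are diffeomorphic to $S^{m_2}$.

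\textbf{Step 2: direct check that $X$ is smooth.} For a self-contained argument I would rerun the case analysis of the proof of Theorem \ref{thm:1} with only the two defining functions $F_1=a^2-{x_1}^2-{\Sigma}{y_{1,j_1}}^2$ and $F_2={a_2}^2-{x_2}^2-{\Sigma}{y_{2,j_2}}^2$. Their gradients involve disjoint sets of variables, and each gradient is nonzero on $X$ because a point of a sphere of positive radius is never the origin. So the two gradients are linearly independent at every point of $X$, and the implicit function theorem shows $X$ is a closed real algebraic submanifold of dimension $m$. The image under ${\pi}_{m+2,2}$ is $[-a,a]\times[-a_2,a_2]$, since each factor projects onto its interval.

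\textbf{Step 3: the height function.} The height ${\pi}_{m+2,1}{\mid}_X$ is $x_1$, which depends only on the first factor. It is the product of the height function of $S^{m_1}$ of radius $a$ with the identity on $S^{m_2}$. Its critical set is $\{(\pm a,0,\ldots,0)\}\times S^{m_2}$, and the Hessian is nondegenerate in the normal directions because the sphere height function is Morse at its poles. This gives Morse-Bott, with critical contours $\cong S^{m_2}$ at the values $\pm a$.

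The only delicate point is bookkeeping: Theorem \ref{thm:1} is stated with the third equation, and the replacement must preserve exactly the properties claimed in the "corresponding remaining statements". The explicit isomorphism of Step 1 settles this, so I expect no real obstacle.
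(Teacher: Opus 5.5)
Your proposal is correct. Step 2 is essentially the paper's proof: the paper says only that one reruns Cases 1-1 through 1-4 of the proof of Theorem \ref{thm:1} with the variable $t$ omitted, i.e.\ applies the implicit function theorem to the two remaining defining functions. Your version of that step is cleaner. The observation that $\nabla F_1$ and $\nabla F_2$ are nonzero on $X$ and involve disjoint sets of variables handles every point at once, so you never need to split according to where $(x_1,x_2)$ lies in $\overline{D}$.

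Step 1 is a genuinely different route that the paper does not take. You note that the third equation of Theorem \ref{thm:1} collapses to $x_2=a_2t$. You then realize the new manifold as the image of the old one under an explicit linear isomorphism compatible with the projection to the $(x_1,x_2)$-plane. This lets (\ref{thm:1.1})--(\ref{thm:1.4}) transfer at once from Theorem \ref{thm:1} as stated, rather than from its proof. One small wording fix: the map forgetting $t$ and rescaling the $y_{2,j_2}$ is an isomorphism from the hyperplane $\{x_2=a_2t\}\subset{\mathbb{R}}^{m+3}$, which contains $X$, onto ${\mathbb{R}}^{m+2}$. It is not an isomorphism from all of ${\mathbb{R}}^{m+3}$.

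Step 3 uses the product structure $S^{m_1}_a\times S^{m_2}_{a_2}$, with the height function factoring through the first sphere. This gives a self-contained check of the image, the Morse-Bott property and the critical contours $\cong S^{m_2}$. The paper's one-line argument leaves all of these implicit; indeed its proof of Theorem \ref{thm:1} itself only carries out the smoothness check.

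So the paper's route buys brevity, while yours actually justifies the ``corresponding remaining statements'' and not just smoothness of $X$.
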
 
\begin{proof}
Here, we explain our proof based on that of Theorem \ref{thm:1} shortly. In short, in each of Cases 1-1, 1-2, 1-3, and 1-4, we omit "$t$" and we can prove in a same way.   
\end{proof}

\subsection{Theorem \ref{thm:0} revisited as our main result.}
\label{subsec:3.4}
We present Theorem \ref{thm:0} in revised and rigorous ways. Presented statements are partially presented in \cite{kitazawa8}.

Theorem \ref{thm:3} is partially presented in \cite{kitazawa8} (\cite[Theorem 3]{kitazawa8}). Some are an answer to \cite[Remark 2]{kitazawa8}. The Morse-Bott function case is studied as our new study and some of the statement (\ref{thm:3.1}) is of our related new result and the statement  (\ref{thm:3.2}) is of this, in addition. On the other hands, in \cite{kitazawa8}, the region $D_{S_a,a_{-1},a_{1}}$ is a specific case in a certain class of essentially more general cases.
\begin{Thm}
\label{thm:3}
\begin{enumerate}
\item \label{thm:3.1}
 In Theorem \ref{thm:1}, ${{\pi}_{m+3,1} {\mid}_{X_{D_{S_a,a_{-1},a_{1}},m_1,m_2}}}^{\prime,{\rm P}}$ is a smooth function enjoying either the following three.
\begin{enumerate}
\item \label{thm:3.1.1}
This is the case $a_1:=a_{-1,1}=a_{1,1}$. 
The function ${{\pi}_{m+3,1} {\mid}_{X_{D_{S_a,a_{-1},a_{1}},m_1,m_2}}}^{\prime,{\rm P}}$ is a Morse-Bott function.
The set $S({{\pi}_{m+3,1} {\mid}_{X_{D_{S_a,a_{-1},a_{1}},m_1,m_2}}}^{\prime,{\rm P}})$ is the disjoint union of the following sets.
\begin{itemize}
\item The preimage ${{\pi}_{m+3,1} {\mid}_{X_{D_{S_a,a_{-1},a_{1}},m_1,m_2}}}^{-1}(0)$, which is diffeomorphic to $S^{m_1-1} \times S^{m_2}$, and the index of each connected component of this for the function  for the function ${{\pi}_{m+3,1} {\mid}_{X_{D_{S_a,a_{-1},a_{1}},m_1,m_2}}}^{\prime,{\rm P}}$ is $1$. The values of the function ${{\pi}_{m+3,1} {\mid}_{X_{D_{S_a,a_{-1},a_{1}},m_1,m_2}}}^{\prime,{\rm P}}$ there are $\frac{1}{1+{a_1}^2}$.
\item The preimage ${{\pi}_{m+3,1} {\mid}_{X_{D_{S_a,a_{-1},a_{1}},m_1,m_2}}}^{-1}(-a)$, which is diffeomorphic to $S^{m_2}$, and the index of this for the function ${{\pi}_{m+3,1} {\mid}_{X_{D_{S_a,a_{-1},a_{1}},m_1,m_2}}}^{\prime,{\rm P}}$ is $0$. The values of the function ${{\pi}_{m+3,1} {\mid}_{X_{D_{S_a,a_{-1},a_{1}},m_1,m_2}}}^{\prime,{\rm P}}$ there are $0$.
\item The preimage ${{\pi}_{m+3,1} {\mid}_{X_{D_{S_a,a_{-1},a_{1}},m_1,m_2}}}^{-1}(a)$, which is diffeomorphic to $S^{m_2}$, and the index of this for ${{\pi}_{m+3,1} {\mid}_{X_{D_{S_a,a_{-1},a_{1}},m_1,m_2}}}^{\prime,{\rm P}}$ is $0$. The values of the function ${{\pi}_{m+3,1} {\mid}_{X_{D_{S_a,a_{-1},a_{1}},m_1,m_2}}}^{\prime,{\rm P}}$ there are $0$.
\end{itemize}
\item \label{thm:3.1.2}
This is the case $a_{-1,1}a_{1,1}>0$ with $a_{-1,1} \neq a_{1,1}$. The function ${{\pi}_{m+3,1} {\mid}_{X_{D_{S_a,a_{-1},a_{1}},m_1,m_2}}}^{\prime,{\rm P}}$ is, around some connected components, Morse-Bott functions. The set $S({{\pi}_{m+3,1} {\mid}_{X_{D_{S_a,a_{-1},a_{1}},m_1,m_2}}}^{\prime,{\rm P}})$ is the disjoint union of the following sets. 
\begin{itemize}
\item The preimage ${{\pi}_{m+3,2} {\mid}_{X_{D_{S_a,a_{-1},a_{1}},m_1,m_2}}}^{-1}((0,a_{-1,2}))$, which is diffeomorphic to $S^{m_1-1}$. If the restriction of ${{\pi}_{m+3,1} {\mid}_{X_{D_{S_a,a_{-1},a_{1}},m_1,m_2}}}^{\prime,{\rm P}}$ to some small open neighborhood of the set is a Morse-Bott function, then the index of each connected component of which for this is $1$. 
The values of the function ${{\pi}_{m+3,1} {\mid}_{X_{D_{S_a,a_{-1},a_{1}},m_1,m_2}}}^{\prime,{\rm P}}$ there are $\frac{1}{1+{a_{-1,1}}^2}$.
\item  The preimage ${{\pi}_{m+3,2} {\mid}_{X_{D_{S_a,a_{-1},a_{1}},m_1,m_2}}}^{-1}((0,a_{1,2}))$, which is diffeomorphic to $S^{m_1-1}$.  If the restriction of ${{\pi}_{m+3,1} {\mid}_{X_{D_{S_a,a_{-1},a_{1}},m_1,m_2}}}^{\prime,{\rm P}}$ to some small open neighborhood of the set is a Morse-Bott function, then the index of each connected component of which for this is $1$.
The values of the function ${{\pi}_{m+3,1} {\mid}_{X_{D_{S_a,a_{-1},a_{1}},m_1,m_2}}}^{\prime,{\rm P}}$ there are  $\frac{1}{1+{a_{1,1}}^2}$.
\item The preimage ${{\pi}_{m+3,1} {\mid}_{X_{D_{S_a,a_{-1},a_{1}},m_1,m_2}}}^{-1}(-a)$, which is diffeomorphic to $S^{m_2}$. The restriction of ${{\pi}_{m+3,1} {\mid}_{X_{D_{S_a,a_{-1},a_{1}},m_1,m_2}}}^{\prime,{\rm P}}$ to some small open neighborhood of the set is a Morse-Bott function and the index of the subset for this is $0$. The values of the function ${{\pi}_{m+3,1} {\mid}_{X_{D_{S_a,a_{-1},a_{1}},m_1,m_2}}}^{\prime,{\rm P}}$ there are $0$.
\item The preimage ${{\pi}_{m+3,1} {\mid}_{X_{D_{S_a,a_{-1},a_{1}},m_1,m_2}}}^{-1}(a)$, which is diffeomorphic to $S^{m_2}$. The restriction of ${{\pi}_{m+3,1} {\mid}_{X_{D_{S_a,a_{-1},a_{1}},m_1,m_2}}}^{\prime,{\rm P}}$ to some small open neighborhood of the set is a Morse-Bott function and the index of the subset for this is $0$. The values of the function ${{\pi}_{m+3,1} {\mid}_{X_{D_{S_a,a_{-1},a_{1}},m_1,m_2}}}^{\prime,{\rm P}}$ there are $0$.
\end{itemize}
\item \label{thm:3.1.3}
This is the case $a_{-1,1}a_{1,1}<0$.  The set $S({{\pi}_{m+3,1} {\mid}_{X_{D_{S_a,a_{-1},a_{1}},m_1,m_2}}}^{\prime,{\rm P}})$ is the disjoint union of the four sets of the case {\rm (}\ref{thm:3.1.2}{\rm )} and the set as follows.
We can define the unique number $t_0$ with $-1 \leq t_0 \leq 1$  and $a_{-1,1}(-\frac{1}{2}t_0+\frac{1}{2})+a_{1,1}(\frac{1}{2}t_0+\frac{1}{2})=0$, and the set is, the preimage ${{\pi}_{m+3,2} {\mid}_{X_{D_{S_a,a_{-1},a_{1}},m_1,m_2}}}^{-1}((0,a_{-1,2}(-\frac{1}{2}t_0+\frac{-1}{2})+a_{1,2}(\frac{1}{2}t_0+\frac{1}{2})))$, which is diffeomorphic to $S^{m_1-1} \times S^{m_2-1}$. If the restriction of ${{\pi}_{m+3,1} {\mid}_{X_{D_{S_a,a_{-1},a_{1}},m_1,m_2}}}^{\prime,{\rm P}}$ to some small open neighborhood of the set is a Morse-Bott function, then the index of each connected component of this for ${{\pi}_{m+3,1} {\mid}_{X_{D_{S_a,a_{-1},a_{1}},m_1,m_2}}}^{\prime,{\rm P}}$ is $2$. The values of the function ${{\pi}_{m+3,1} {\mid}_{X_{D_{S_a,a_{-1},a_{1}},m_1,m_2}}}^{\prime,{\rm P}}$ there are  $1$.
\end{enumerate}
\item  \label{thm:3.2}
In addition, a case of Morse-Bott function ${{\pi}_{m+3,1} {\mid}_{X_{D_{S_a,a_{-1},a_{1}},m_1,m_2}}}^{\prime,{\rm P}}$ in {\rm (}\ref{thm:3.1.3}{\rm )} can be obtained by the condition $a_{1,1}=-a_{-1,1}$ with, for example. 
\item \label{thm:3.3}
In Theorem \ref{thm:2}, we have a corresponding fact and the statements are same as in the case of Theorem \ref{thm:1} with the case {\rm (}\ref{thm:3.1.1}{\rm )}, here.
\end{enumerate}
\end{Thm}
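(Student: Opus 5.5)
The plan is to reduce the canonical 1st derivative to an explicit rational function of two variables and then do calculus on that function. Write $\alpha(t):=a_{-1,1}(-\frac{1}{2}t+\frac{1}{2})+a_{1,1}(\frac{1}{2}t+\frac{1}{2})$ and $\beta(t):=a_{-1,2}(-\frac{1}{2}t+\frac{1}{2})+a_{1,2}(\frac{1}{2}t+\frac{1}{2})$, so that $x_2=g(x_1,t):=\alpha(t)x_1+\beta(t)$. By Theorem \ref{thm:1} (and its proof), $X_{D_{S_a,a_{-1},a_{1}},m_1,m_2}$ is the graph of $g$ over $M:=S^{m_1}_a\times S^{m_2}$, where $S^{m_1}_a$ is the sphere of radius $a$ in the $(x_1,y_1)$-coordinates and $S^{m_2}$ is the unit sphere in the $(t,y_2)$-coordinates. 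Hence the induced metric is $G+dg\otimes dg$, with $G$ the product round metric. For the height $c=x_1$, the function $c^{\prime,{\rm P}}$ is by definition $|dc|^2$ in the induced metric, which follows from Subsection \ref{subsec:3.1}. The Sherman--Morrison formula then gives
\[
c^{\prime,{\rm P}}=|dx_1|_G^2-\frac{\langle dx_1,dg\rangle_G^2}{1+|dg|_G^2}.
\]
Put $\rho:=1-x_1^2/a^2=|dx_1|_G^2$ and $\sigma:=1-t^2=|dt|_G^2$, and use $dg=\alpha\,dx_1+x_1\alpha'\,dt$ with $\alpha'=\frac{1}{2}(a_{1,1}-a_{-1,1})$ constant. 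I expect to obtain
\[
c^{\prime,{\rm P}}=F:=\frac{\rho\,(1+x_1^2\alpha'^2\sigma)}{1+\alpha^2\rho+x_1^2\alpha'^2\sigma}.
\]
This depends only on $(x_1,t)\in[-a,a]\times[-1,1]$. It is therefore invariant under $O(m_1)\times O(m_2)$ acting on $(y_1,y_2)$, and the orbits over interior points, edges and corners of the rectangle are $S^{m_1-1}\times S^{m_2-1}$, $S^{m_1-1}$ or $S^{m_2-1}$, and points, respectively.

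For case (\ref{thm:3.1.1}) we have $\alpha'=0$, so $F=\rho/(1+a_1^2\rho)$. This is a strictly increasing function of $\rho$ with nonvanishing derivative, composed with $\rho=1-x_1^2/a^2$. The function $\rho$ is a Morse-Bott function on $S^{m_1}_a\times S^{m_2}$, pulled back from Proposition \ref{prop:2} up to scaling. So $F$ is Morse-Bott with the same critical sets and indices. These are $\{x_1=\pm a\}\cong S^{m_2}$, with value $0$ and index $0$, and $\{x_1=0\}\cong S^{m_1-1}\times S^{m_2}$, with value $1/(1+a_1^2)$ and index $1$. Statement (\ref{thm:3.3}) is the same computation with $t$ omitted, as in Theorem \ref{thm:2}.

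For cases (\ref{thm:3.1.2}) and (\ref{thm:3.1.3}), $\alpha'\neq 0$. I will find the critical points of $F$ on $X$ by treating $F$ as a function of $(x_1,y_1,t,y_2)$ on $M$. At $x_1=\pm a$ we have $\rho=0$ and $F\geq 0$, so these are minima. Near them $F\approx\rho\cdot(\text{positive})$, and this gives nondegeneracy in the normal directions, hence index $0$ on $S^{m_2}$. Elsewhere $\rho>0$, and criticality on the sphere factors forces $\partial_{x_1}F=0$ unless $x_1=0$, and $\partial_tF=0$ unless $t=\pm1$.

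At $x_1=0$ we get $F=\rho/(1+\alpha^2\rho)$, which is the maximum in $x_1$ for fixed $t$. Differentiating in $t$ gives $\partial_tF\propto-\alpha\alpha'$. This vanishes only at $t=\pm 1$, which yields the sets over $(0,a_{\pm1,2})$, diffeomorphic to $S^{m_1-1}$ with values $1/(1+a_{\pm1,1}^2)$, or at $t=t_0$ with $\alpha(t_0)=0$. The latter is possible in $[-1,1]$ exactly when $a_{-1,1}a_{1,1}<0$. There $F=\rho=1$, the global maximum, over an interior point, and the orbit is $S^{m_1-1}\times S^{m_2-1}$. Its index is $2$ because $x_1$ and $t$ both give strict decrease, with Hessian $\mathrm{diag}(-2/a^2,-2\alpha'^2)$ to leading order.

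I must also show that there are no critical points with $x_1\neq 0,\pm a$. I will do this by checking that $\partial_{x_1}F\neq 0$ there: the numerator should reduce to a sign-definite expression times $x_1$. Since $\sigma\geq 0$, I expect to show the $x_1$-derivative has the sign of $-x_1$. Then the indices $1$ at $t=\pm1$ come from the decrease in $x_1$ together with the behaviour in $t$ near the boundary. Here the $\sigma$-dependence enters linearly, so $F$ is nondegenerate in $t$ only when the $\sigma$-coefficient is nonzero. This is why the statement is phrased conditionally ("if the restriction \ldots is Morse-Bott").

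For (\ref{thm:3.2}), with $a_{1,1}=-a_{-1,1}$ the function $\alpha$ is odd and $t_0=0$. The symmetry $t\mapsto -t$, together with the explicit second derivatives at $t=\pm1$ and $t=0$, should give nondegeneracy everywhere.

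I expect the main obstacle to be the global claim that no further critical points exist off $x_1\in\{0,\pm a\}$, together with the normal Hessian computations at $t=\pm1$. The first step I would try is to clear denominators and write $\partial_{x_1}F$ as $x_1$ times a polynomial in $(x_1^2,\sigma)$ whose sign is fixed on the rectangle.
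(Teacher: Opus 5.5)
There is a genuine gap, and it is in your very first computation. Since $g(x_1,t)=\alpha(t)x_1+\beta(t)$, the differential is $dg=\alpha\,dx_1+\gamma\,dt$ with $\gamma:=\alpha' x_1+\beta'$. You wrote $dg=\alpha\,dx_1+x_1\alpha'\,dt$, which drops $\beta'=\frac{1}{2}(a_{1,2}-a_{-1,2})$. This term is never zero here, because the hypothesis of Theorem \ref{thm:1} at $x_1=0$ forces $a_{-1,2}<a_{1,2}$. Your Sherman--Morrison set-up is otherwise correct, and it gives
\[
c^{\prime,{\rm P}}=F=\frac{\rho\,(1+\gamma^2\sigma)}{1+\alpha^2\rho+\gamma^2\sigma},\qquad \gamma=\alpha' x_1+\beta'.
\]
You can check this directly. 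Take $m_1=m_2=1$, $a=1$, $a_{\pm1,1}=a_1$, $a_{\pm1,2}=\pm b$. Over $(x_1,t)=(0,0)$ the tangent plane is spanned by $(1,a_1,0,0,0)$ and $(0,b,1,0,0)$. The projection of $e_1$ onto it is $\frac{1}{1+a_1^2+b^2}(1+b^2,a_1,-a_1b,0,0)$. This has a nonzero $t$-component, and its squared length is $\frac{1+b^2}{1+a_1^2+b^2}\neq\frac{1}{1+a_1^2}$ when $a_1b\neq0$.

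Every later step that used evenness of $F$ in $x_1$, or independence of $F$ from $\sigma$ on $\{x_1=0\}$, now fails.
In case (\ref{thm:3.1.1}) with $a_1\neq0$, $F$ is not a function of $\rho$ alone. On $\{x_1=0\}$ it equals $\frac{1+\beta'^2\sigma}{1+a_1^2+\beta'^2\sigma}$, which is strictly increasing in $\sigma=1-t^2$. So $\{x_1=0\}$ is not a critical manifold. The only critical points there lie over $t=\pm1$ and over $t=0$; the latter is a copy of $S^{m_1-1}\times S^{m_2-1}$ with value $\frac{1+\beta'^2}{1+a_1^2+\beta'^2}$.
In cases (\ref{thm:3.1.2}) and (\ref{thm:3.1.3}), at $x_1=0$ one has $\partial_{x_1}F=\frac{2\alpha^2\alpha'\beta'\sigma}{(1+\alpha^2+\beta'^2\sigma)^2}$, which is nonzero whenever $|t|<1$ and $\alpha(t)\neq0$. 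So $x_1=0$ does not maximize $F$ for fixed $t$. The global claim you planned to prove (no critical points off $x_1\in\{0,\pm a\}$) also fails. For $a=1$, $(a_{-1,1},a_{1,1})=(1,3)$, $(a_{-1,2},a_{1,2})=(-10,10)$, one gets $F=\frac{101}{105}$ over $(0,0)$. This exceeds every listed critical value $0,\frac12,\frac1{10}$, so the maximum of $F$ on the compact $X$ is attained at an unlisted critical point.

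So your plan cannot be completed to the stated conclusions. Your formula reproduces exactly the values in the statement (at $x_1=0$ it gives $\frac{1}{1+\alpha(t)^2}$), but that agreement is not evidence. The paper's sketch rests on the same omission: it asserts that at $x_1=0$ the vector $v_{p,c,+}$ lies in the $(x_1,x_2)$-plane, and this fails whenever $\alpha\beta'\sigma\neq0$.

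With the correct $F$, only the configurations with $\alpha\equiv0$ give $F=\rho$ and the stated Morse--Bott picture. These are case (\ref{thm:3.1.1}) with $a_1=0$, and statement (\ref{thm:3.3}). Some listed sets do survive as critical sets with the stated values:
over $x_1=\pm a$;
over $(0,a_{\pm1,2})$, where $\sigma=0$ kills the extra term;
over $(0,\beta(t_0))$, where $F=\rho=1$ is the global maximum since $F\le\rho\le1$.
However, the completeness of the list and all the indices must be redone from the corrected formula.

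There are also two smaller problems, even inside your own model.
First, in case (\ref{thm:3.1.2}) your $F(0,t)=\frac{1}{1+\alpha(t)^2}$ is strictly monotone in $t$. At the endpoint where it is larger, the $m_2$ normal directions in $S^{m_2}$ are also negative definite, so the index there is $1+m_2$, not $1$.
Second, $\partial_tF\propto-\alpha\alpha'$ vanishes only at $t_0$. The sets over $t=\pm1$ are critical for a different reason: $dt=0$ at the poles of $S^{m_2}$.
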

\begin{proof}
Except additional studies on the case of a Morse-Bott function ${{\pi}_{m+3,1} {\mid}_{X_{D_{S_a,a_{-1},a_{1}},m_1,m_2}}}^{\prime,{\rm P}}$, we have proven statements here in \cite{kitazawa8}. Of course we do not need related non-trivial facts.

We first prove the statement (\ref{thm:3.1}).



By Proposition \ref{prop:2} and respecting affine isomorphisms, the structures of ${\pi}_{m+3,1} {\mid}_{X_{D_{S_a,a_{-1},a_{1}},m_1,m_2}}$, $S( {\pi}_{m+3,1} {\mid}_{X_{D_{S_a,a_{-1},a_{1}},m_1,m_2}})$, the local change of the function ${{\pi}_{m+3,1} {\mid}_{X_{D_{S_a,a_{-1},a_{1}},m_1,m_2}}}^{\prime,{\rm P}}$ according to local change of $x_1$ and $t$, and so on,  
the critical set $S({{\pi}_{m+3,1} {\mid}_{X_{D_{S_a,a_{-1},a_{1}},m_1,m_2}}}^{\prime,{\rm P}})$ of the function  ${{\pi}_{m+3,1} {\mid}_{X_{D_{S_a,a_{-1},a_{1}},m_1,m_2}}}^{\prime,{\rm P}}$ must be a subset of

 ${{\pi}_{m+3,2} {\mid}_{X_{D_{S_a,a_{-1},a_{1}},m_1,m_2}}}^{-1}(\{(0,x_2) \mid x_2 \in \mathbb{R}\} \sqcup \{(-a,x_2) \mid x_2 \in \mathbb{R}\} \sqcup \{(a,x_2) \mid x_2 \in \mathbb{R}\})$.

We investigate $S({{\pi}_{m+3,1} {\mid}_{X_{D_{S_a,a_{-1},a_{1}},m_1,m_2}}}^{\prime,{\rm P}})$ and calculate the values of ${{\pi}_{m+3,1} {\mid}_{X_{D_{S_a,a_{-1},a_{1}},m_1,m_2}}}^{\prime,{\rm P}}$ on each connected component of $S({{\pi}_{m+3,1} {\mid}_{X_{D_{S_a,a_{-1},a_{1}},m_1,m_2}}}^{\prime,{\rm P}})$.

The values of ${{\pi}_{m+3,1} {\mid}_{X_{D_{S_a,a_{-1},a_{1}},m_1,m_2}}}^{\prime,{\rm P}}$ on ${{\pi}_{m+3,2} {\mid}_{X_{D_{S_a,a_{-1},a_{1}},m_1,m_2}}}^{-1}(\{(-a,x_2) \mid x_2 \in \mathbb{R}\} \sqcup \{(a,x_2) \mid x_2 \in \mathbb{R}\})$ are $0$ and ${{\pi}_{m+3,2} {\mid}_{X_{D_{S_a,a_{-1},a_{1}},m_1,m_2}}}^{-1}(\{(-a,x_2) \mid x_2 \in \mathbb{R}\} \sqcup \{(a,x_2) \mid x_2 \in \mathbb{R}\})$ is the level set ${{{\pi}_{m+3,1} {\mid}_{X_{D_{S_a,a_{-1},a_{1}},m_1,m_2}}}^{\prime,{\rm P}}}^{-1}(0)$ of the function ${{\pi}_{m+3,1} {\mid}_{X_{D_{S_a,a_{-1},a_{1}},m_1,m_2}}}^{\prime,{\rm P}}$. This set is also a smooth submanifold of $X_{D_{S_a,a_{-1},a_{1}},m_1,m_2}$ and diffeomorphic to $S^{m_2} \sqcup S^{m_2}$. The index of each connected component of the set for the function ${{\pi}_{m+3,1} {\mid}_{X_{D_{S_a,a_{-1},a_{1}},m_1,m_2}}}^{\prime,{\rm P}}$ is $0$. Note that this level set is for all three cases (\ref{thm:3.1.1}), (\ref{thm:3.1.2}), and (\ref{thm:3.1.3}).

In the case (\ref{thm:3.1.1}), the preimage ${{\pi}_{m+3,2} {\mid}_{X_{D_{S_a,a_{-1},a_{1}},m_1,m_2}}}^{-1}(\{(0,x_2) \mid x_2 \in \mathbb{R}\})$ is also a subset of the set $S({{\pi}_{m+3,1} {\mid}_{X_{D_{S_a,a_{-1},a_{1}},m_1,m_2}}}^{\prime,{\rm P}})$ and this is also a smooth submanifold of $X_{D_{S_a,a_{-1},a_{1}},m_1,m_2}$ and diffeomorphic to $S^{m_1-1} \times S^{m_2}$. The values of ${{\pi}_{m+3,1} {\mid}_{X_{D_{S_a,a_{-1},a_{1}},m_1,m_2}}}^{\prime,{\rm P}}$ there and ${{\pi}_{m+3,2} {\mid}_{X_{D_{S_a,a_{-1},a_{1}},m_1,m_2}}}^{-1}(\{(0,x_2) \mid x_2 \in \mathbb{R}\})$ is the level set ${{{\pi}_{m+3,1} {\mid}_{X_{D_{S_a,a_{-1},a_{1}},m_1,m_2}}}^{\prime,{\rm P}}}^{-1}(1)$ of the function. The index of each connected component of the set for the function ${{\pi}_{m+3,1} {\mid}_{X_{D_{S_a,a_{-1},a_{1}},m_1,m_2}}}^{\prime,{\rm P}}$  is $1$. 

In the cases (\ref{thm:3.1.2}) and (\ref{thm:3.1.3}), $S({{\pi}_{m+3,1} {\mid}_{X_{D_{S_a,a_{-1},a_{1}},m_1,m_2}}}^{\prime,{\rm P}})$ must be subsets of

${{\pi}_{m+3,2} {\mid}_{X_{D_{S_a,a_{-1},a_{1}},m_1,m_2}}}^{-1}(\{(0,x_2) \mid x_2 \in \{a_{-1,2},0,a_{1,2}\}\} \sqcup \{(-a,x_2) \mid x_2 \in \mathbb{R}\} \sqcup \{(a,x_2) \mid x_2 \in \mathbb{R}\})$.


The preimages ${{\pi}_{m+3,2} {\mid}_{X_{D_{S_a,a_{-1},a_{1}},m_1,m_2}}}^{-1}((0,a_{-1,2}))$ and ${{\pi}_{m+3,2} {\mid}_{X_{D_{S_a,a_{-1},a_{1}},m_1,m_2}}}^{-1}((0,a_{1,2}))$ are smooth submanifolds in $X_{D_{S_a,a_{-1},a_{1}},m_1,m_2}$ and diffeomorphic to $S^{m_1-1}$. They are also unions of connected components of $S({{\pi}_{m+3,1} {\mid}_{X_{D_{S_a,a_{-1},a_{1}},m_1,m_2}}}^{\prime,{\rm P}})$.

The tangent vector $v_{p,{\pi}_{m+3,1} {\mid}_{X_{D_{S_a,a_{-1},a_{1}},m_1,m_2}},+}$ at $p \in X_{D_{S_a,a_{-1},a_{1}},m_1,m_2}$ as in the Subsection \ref{subsec:3.1} is parallel to a vector whose $j$-th components are $0$ for $j \geq 3$ if and only if $x_1=0$ in $p=(x_1,x_2,t,{(y_{1,j_1})}_{j_1=1}^{m_1},{(y_{2,j_2})}_{j_2=1}^{m_2})$.

From this observation, the values of ${{\pi}_{m+3,1} {\mid}_{X_{D_{S_a,a_{-1},a_{1}},m_1,m_2}}}^{\prime,{\rm P}}$ on ${{\pi}_{m+3,2} {\mid}_{X_{D_{S_a,a_{-1},a_{1}},m_1,m_2}}}^{-1}((0,a_{-1,2}))$ are calculated to be $\frac{1}{{a_{-1,1}}^2+1} $ and the values of ${{\pi}_{m+3,1} {\mid}_{X_{D_{S_a,a_{-1},a_{1}},m_1,m_2}}}^{\prime,{\rm P}}$ on ${{\pi}_{m+3,2} {\mid}_{X_{D_{S_a,a_{-1},a_{1}},m_1,m_2}}}^{-1}((0,a_{1,2}))$ are calculated to be $\frac{1}{{a_{1,1}}^2+1} $. 

In the cases (\ref{thm:3.1.2}) and (\ref{thm:3.1.3}), for the index of each connected component of these sets ${{\pi}_{m+3,2} {\mid}_{X_{D_{S_a,a_{-1},a_{1}},m_1,m_2}}}^{-1}((0,a_{\pm 1,2}))$ for the function ${{\pi}_{m+3,1} {\mid}_{X_{D_{S_a,a_{-1},a_{1}},m_1,m_2}}}^{\prime,{\rm P}}$ under the condition that this function is Morse-Bott around its small neighborhood, we investigate several preimages of subsets in ${\mathbb{R}}^2$ by the map ${\pi}_{m+3,2} {\mid}_{X_{D_{S_a,a_{-1},a_{1}},m_1,m_2}}$.

First we consider $\{(x_1,\pm a_{\pm 1,2}) \mid x_1 \in \mathbb{R}\}$ and the preimage ${{\pi}_{m+3,2} {\mid}_{X_{D_{S_a,a_{-1},a_{1}},m_1,m_2}}}^{-1}(\{(x_1,\pm a_{\pm 1,2}) \mid x_1 \in \mathbb{R}\})$. We can have a case as in Proposition \ref{prop:2} on the $m_1$-dimensional unit sphere, where "$m:=m_1$" there. Second we consider $\{(0,x_2) \mid x_2 \in \mathbb{R}\}$ and the preimage ${{\pi}_{m+3,2} {\mid}_{X_{D_{S_a,a_{-1},a_{1}},m_1,m_2}}}^{-1}(\{(0,x_2) \mid x_2 \in \mathbb{R}\})$. We can have a case as in Proposition \ref{prop:2} on the $m_2$-dimensional unit sphere, where "$m:=m_2$" (in Proposition \ref{prop:2}). The $m_1$-dimensional sphere and the $m_2$-dimensional sphere here intersect in a one-point set and the sum of these two corresponding tangent vectors at the point is the tangent vector of $X_{D_{S_a,a_{-1},a_{1}},m_1,m_2}$ there. Remember the tangent vector $v_{p,{\pi}_{m+3,1} {\mid}_{X_{D_{S_a,a_{-1},a_{1}},m_1,m_2}},+}$ at $p=(x_1,x_2,t,{(y_{1,j_1})}_{j_1=1}^{m_1},{(y_{2,j_2})}_{j_2=1}^{m_2}) \in X_{D_{S_a,a_{-1},a_{1}},m_1,m_2}$ with $x_1=0$. From this, at $p=(0,(-\frac{1}{2}t+\frac{1}{2})(a_{-1,1}x_1+a_{-1,2})+(\frac{1}{2}t+\frac{1}{2})(a_{1,1}x_1+a_{1,2}),t,{(y_{1,j_1})}_{j_1=1}^{m_1},{(y_{2,j_2})}_{j_2=1}^{m_2})=(0,a_{-1,2}(-\frac{1}{2}t+\frac{1}{2})+a_{1,2}(\frac{1}{2}t+\frac{1}{2}),t,{(y_{1,j_1})}_{j_1=1}^{m_1},{(y_{2,j_2})}_{j_2=1}^{m_2}) \in X_{D_{S_a,a_{-1},a_{1}},m_1,m_2}$, $v_{p,{\pi}_{m+3,1} {\mid}_{X_{D_{S_a,a_{-1},a_{1}},m_1,m_2}},+}$, $v_{p,{\pi}_{m+3,1} {\mid}_{X_{D_{S_a,a_{-1},a_{1}},m_1,m_2}},+}$ is mapped to a tangent vector at ${\pi}_{m+3,1}(p)$ of ${\mathbb{R}}$. By our construction, the resulting vector is understood to be tangent to the straight line $\{(x_1,x_2) \mid x_2=-\frac{1}{2}t+\frac{1}{2})(a_{-1,1}x_1+a_{-1,2})+(\frac{1}{2}t+\frac{1}{2})(a_{1,1}x_1+a_{1,2})\}$. The square of its length is $\frac{1}{{\{a_{-1,1}(-\frac{1}{2}t+\frac{1}{2})+a_{1,1}(\frac{1}{2}t+\frac{1}{2})\}}^2+1}$, by the differential of ${\pi}_{m+3,1}$. The observation on indices of each connected component of these sets ${{\pi}_{m+3,2} {\mid}_{X_{D_{S_a,a_{-1},a_{1}},m_1,m_2}}}^{-1}((0,a_{\pm 1,2}))$ for the function ${{\pi}_{m+3,1} {\mid}_{X_{D_{S_a,a_{-1},a_{1}},m_1,m_2}}}^{\prime,{\rm P}}$ are obtained by local structures of the functions and the maps, under the condition that these functions are Morse-Bott functions around these sets.


In the cases (\ref{thm:3.1.1}) and (\ref{thm:3.1.2}), the sets $S({{\pi}_{m+3,1} {\mid}_{X_{D_{S_a,a_{-1},a_{1}},m_1,m_2}}}^{\prime,{\rm P}})$ are all investigated.
In the case (\ref{thm:3.1.3}), we also need ${{\pi}_{m+3,2} {\mid}_{X_{D_{S_a,a_{-1},a_{1}},m_1,m_2}}}^{-1}((0,0))$ to complete the set $S({{\pi}_{m+3,1} {\mid}_{X_{D_{S_a,a_{-1},a_{1}},m_1,m_2}}}^{\prime,{\rm P}})$. This subset is a smooth submanifold of $X_{D_{S_a,a_{-1},a_{1}},m_1,m_2}$ and diffeomorphic to $S^{m_1-1} \times S^{m_2-1}$. The value of ${{\pi}_{m+3,1} {\mid}_{X_{D_{S_a,a_{-1},a_{1}},m_1,m_2}}}^{\prime,{\rm P}}$ there is $1$ and the set is also the level set ${{{\pi}_{m+3,1} {\mid}_{X_{D_{S_a,a_{-1},a_{1}},m_1,m_2}}}^{\prime,{\rm P}}}^{-1}(1)$ of the function. If around the set ${{\pi}_{m+3,1} {\mid}_{X_{D_{S_a,a_{-1},a_{1}},m_1,m_2}}}^{\prime,{\rm P}}$ is Morse-Bott, then the index of each connected component of the set for the function ${{\pi}_{m+3,1} {\mid}_{X_{D_{S_a,a_{-1},a_{1}},m_1,m_2}}}^{\prime,{\rm P}}$ is calculated to be $2$.

We prove the statement (\ref{thm:3.2}). Remember the tangent vector $v_{p,{\pi}_{m+3,1} {\mid}_{X_{D_{S_a,a_{-1},a_{1}},m_1,m_2}},+}$ at $p=(x_1,x_2,t,{(y_{1,j_1})}_{j_1=1}^{m_1},{(y_{2,j_2})}_{j_2=1}^{m_2}) \in X_{D_{S_a,a_{-1},a_{1}},m_1,m_2}$ with $x_1=0$. 
Remember that at $p=(0,a_{-1,2}(-\frac{1}{2}t+\frac{1}{2})+a_{1,2}(\frac{1}{2}t+\frac{1}{2}),t,{(y_{1,j_1})}_{j_1=1}^{m_1},{(y_{2,j_2})}_{j_2=1}^{m_2}) \in X_{D_{S_a,a_{-1},a_{1}},m_1,m_2}$, the tangent vector $v_{p,{\pi}_{m+3,1} {\mid}_{X_{D_{S_a,a_{-1},a_{1}},m_1,m_2}},+}$ is mapped to a tangent vector at ${\pi}_{m+3,1}(p)$ of ${\mathbb{R}}$ the square of whose  length is $\frac{1}{{\{a_{-1,1}(-\frac{1}{2}t+\frac{1}{2})+a_{1,1}(\frac{1}{2}t+\frac{1}{2})\}}^2+1}$, by the differential of ${\pi}_{m+3,1}$.
We consider $a_{1,1}=-a_{-1,1}$. the square of whose length of the previous vector is calculated as follows.

$\frac{1}{{\{a_{-1,1}(-\frac{1}{2}t+\frac{1}{2})+a_{1,1}(\frac{1}{2}t+\frac{1}{2})\}}^2+1} = \frac{1}{{(a_{1,1}t)}^2+1}= \frac{1}{{(a_{1,1})}^2-{(a_{1,1})}^2 {\Sigma}_{j_2=1}^{m_2} ({y_{2,j_2}})^2+1}$.

The value of its 1st partial derivative by $y_{2,j_{2,0}}$ at ${(y_{2,j_2})}_{j_2=1}^{m_2}$ 
is calculated to be \\
$$
-\frac{2y_{2,j_2}{a_{1,1}}^2}{{{((a_{1,1})}^2-{(a_{1,1})}^2 {\Sigma}_{j_2=1}^{m_2} ({y_{2,j_2}})^2+1)}^2}
$$
and that of the 2nd partial derivative is calculated to be
$$\frac{-2{a_{1,1}}^2({(a_{1,1})}^2-{(a_{1,1})}^2 {\Sigma}_{j_2=1}^{m_2} ({y_{2,j_2}})^2+1)-8{(a_{1,1})}^4{y_{2,j_{2,0}}}^2}{{{((a_{1,1})}^2-{(a_{1,1})}^2 {\Sigma}_{j_2=1}^{m_2} ({y_{2,j_2}})^2+1)}^3}<0$$
and this implies that ${{\pi}_{m+3,1} {\mid}_{X_{D_{S_a,a_{-1},a_{1}},m_1,m_2}}}^{\prime,{\rm P}}$ is a Morse-Bott function around every connected component of its critical set. This completes the proof of the statement (\ref{thm:3.2}).

By the structures of the manifolds and the maps, we can have the corresponding fact for Theorem \ref{thm:2}, easily. We have shown the statement (\ref{thm:3.3}).

This completes the proof.
\end{proof}
\begin{Rem}
\label{rem:1}
Related to Theorem \ref{thm:3} (\ref{thm:3.2}) with (\ref{thm:3.1.2}) and (\ref{thm:3.1.3}), we believe that we can have Morse-Bott functions in generic cases, in other general situations. However, related calculations seem to be more complicated and we do not know conditions for the functions ${{\pi}_{m+3,1} {\mid}_{X_{D_{S_a,a_{-1},a_{1}},m_1,m_2}}}^{\prime,{\rm P}}$ to be Morse-Bott, explicitly.
\end{Rem}
Theorem \ref{thm:4} is also of our new result. This is on topological structures and combinatorial ones of the functions ${{\pi}_{m+3,1} {\mid}_{X_{D_{S_a,a_{-1},a_{1}},m_1,m_2}}}^{\prime,{\rm P}}$ and their Reeb digraphs and level sets are studied.
\begin{Thm}
\label{thm:4}
For Theorem \ref{thm:3}, the following hold, in addition.

\begin{enumerate}
\item \label{thm:4.1} The Reeb digraph $\overrightarrow{R_{{{\pi}_{m+3,1} {\mid}_{X_{D_{S_a,a_{-1},a_{1}},m_1,m_2}}}^{\prime,{\rm P}}}}$ of ${{\pi}_{m+3,1} {\mid}_{X_{D_{S_a,a_{-1},a_{1}},m_1,m_2}}}^{\prime,{\rm P}}$  is as in Proposition \ref{prop:2} in the case of Theorem \ref{thm:3} {\rm (}\ref{thm:3.1.1}{\rm )}. In the case $m_1 \geq 2$ {\rm (}$m_1=1${\rm )}, this is as in the left {\rm (}resp. right{\rm )} figure in Figure \ref{fig:1}. Each contour of ${{\pi}_{m+3,1} {\mid}_{X_{D_{S_a,a_{-1},a_{1}},m_1,m_2}}}^{\prime,{\rm P}}$ is represented as a connected component of  ${{\pi}_{m+3,2} {\mid}_{X_{D_{S_a,a_{-1},a_{1}},m_1,m_2}}}^{-1}(\{(t,x_2) \mid x_2 \in \mathbb{R}\})$ for a fixed number $-a \leq t \leq a$ and if it is not critical, then it is diffeomorphic to $S^{m_1-1} \times S^{m_2}$ {\rm (}resp. $S^{m_2}${\rm )}. 
\item \label{thm:4.2}

The Reeb digraph $\overrightarrow{R_{{{\pi}_{m+3,1} {\mid}_{X_{D_{S_a,a_{-1},a_{1}},m_1,m_2}}}^{\prime,{\rm P}}}}$ of ${{\pi}_{m+3,1} {\mid}_{X_{D_{S_a,a_{-1},a_{1}},m_1,m_2}}}^{\prime,{\rm P}}$  is as follows, in the case of Theorem \ref{thm:3} {\rm (}\ref{thm:3.1.2}{\rm )}.
\begin{itemize}
\item This is for the case $m_1=2$. The digraph is a digraph with exactly four vertices $v_{0,1}$, $v_{0,2}$, $v_a$ and $v_A$ and exactly three edges two of which depart from $v_{0,i}$ {\rm (}$i=1,2${\rm )} and enter $v_a$ and one of which departs from $v_a$ and enters $v_A$.   
\item This is for the case $m_1=1$. The digraph is a digraph with exactly six vertices $v_{0,i}$, $v_{a,i}$, and $v_{A,i}$ {\rm (}$i=1,2${\rm )} and exactly six edges four of which depart from $v_{0,i_1}$ and enter $v_{a,i_2}$ {\rm (}$(i_1,i_2) \in \{1,2\} \times \{1,2\}${\rm )} and two of which depart from $v_{a,i}$ and enter $v_{A,i}$ {\rm (}$i=1,2${\rm )} .  
\end{itemize}

Hereafter,the minimum in the set $\{\frac{1}{1+{a_{-1,1}}^2}, \frac{1}{1+{a_{1,1}}^2}\}$ is denoted by $m_{a}$ and the maximum there is denoted by $m_A$. It may hold that $m_a=m_A$ or $m_A=1$ in this case,  {\rm (}\ref{thm:4.2}{\rm )}.

In this case, the level set ${{{\pi}_{m+3,1} {\mid}_{X_{D_{S_a,a_{-1},a_{1}},m_1,m_2}}}^{\prime,{\rm P}}}^{-1}(u)$ {\rm (}$0<u \leq m_A${\rm )} of the function is as follows.

Hereafter, in this theorem, connected and compact curves $C_{{\rm s},1}$, $C_{{\rm s},2}$, and $C_{\rm s}$ which are smooth densely, are used. 
\begin{itemize}
\item {\rm (}The case $0<u<m_a$.{\rm )} The preimage ${{\pi}_{m+3,2} {\mid}_{X_{D_{S_a,a_{-1},a_{1}},m_1,m_2}}}^{-1}(C_{{\rm s},1} \sqcup C_{{\rm s},2})$ 
of the disjoint union of two connected and compact curves $C_{{\rm s},1}$ and $C_{{\rm s},2}$ of ${\overline{D_{S_a,a_{-1},a_{1}}}}^{{\mathbb{R}}^2}$ homeomorphic to $D^1$. In the case $m_1 \neq 1$, this is homeomorphic to the disjoint union $(S^{m_1-1} \times S^{m_2}) \sqcup (S^{m_1-1} \times S^{m_2})$ and diffeomorphic to it in the case $m \neq 5,6$ or $(m_1,m_2)=(3,3), (4,2)$. In the case $m_1=1$, this is homeomorohic to the disjoint union $S^{m_2} \sqcup S^{m_2} \sqcup S^{m_2} \sqcup S^{m_2}$ and diffeomorphic to it in the case $m_2 \neq 4$. 
\item {\rm (}The case $m_a<u<m_A$.{\rm )} The preimage ${{\pi}_{m+3,2} {\mid}_{X_{D_{S_a,a_{-1},a_{1}},m_1,m_2}}}^{-1}(C_{\rm s})$ of some connected and compact curve $C_{\rm s}$ of $\overline{D_{S_a,a_{-1},a_{1}}}^{{\mathbb{R}}^2}$ homeomorphic to $D^1$.
In the case $m_1 \neq 1$, this is homeomorphic to $S^{m_1-1} \times S^{m_2}$ and diffeomorphic to it in the case $m \neq 5,6$ or $(m_1,m_2)=(3,3), (4,2)$. In the case $m_1=1$, this is homeomorphic to the disjoint union $S^{m_2} \sqcup S^{m_2}$ and diffeomorphic to it in the case $m_2 \neq 4$. 
\item {\rm (}The case $u=m_a, m_A$.{\rm )}
The preimage ${{\pi}_{m+3,2} {\mid}_{X_{D_{S_a,a_{-1},a_{1}},m_1,m_2}}}^{-1}(C_{\rm s})$ of some connected and compact curve $C_{\rm s}$ of $\overline{D_{S_a,a_{-1},a_{1}}}^{{\mathbb{R}}^2}$ homeomorphic to $S^1$ in the case $m_a=m_A$ and $D^1$ in the case $m_a \neq m_A$.

\end{itemize}
\item \label{thm:4.3} The Reeb digraph $\overrightarrow{R_{{{\pi}_{m+3,1} {\mid}_{X_{D_{S_a,a_{-1},a_{1}},m_1,m_2}}}^{\prime,{\rm P}}}}$ of ${{\pi}_{m+3,1} {\mid}_{X_{D_{S_a,a_{-1},a_{1}},m_1,m_2}}}^{\prime,{\rm P}}$  is as follows, in the case of Theorem \ref{thm:3} {\rm (}\ref{thm:3.1.3}{\rm )}.
\begin{itemize}
\item This is for the case $m_1 \geq 2$ and $m_2 \geq 2$. The digraph is a digraph with exactly five vertices $v_{0,1}$, $v_{0,2}$, $v_a$, $v_A$, and $v_1$ and exactly four edges two of which depart from $v_{0,i}$ {\rm (}$i=1,2${\rm )} and enter $v_a$, one of which departs from $v_a$ and enters $v_A$, and the remaining one of which departs from $v_A$ and enters $v_1$.   
\item This is for the case $m_1=1$ and $m_2 \geq 2$. The digraph is a digraph with exactly eight vertices $v_{0,i}$, $v_{a,i}$, $v_{A,i}$, and $v_{1,i}$ {\rm (}$i=1,2${\rm )} and exactly eight edges four of which depart from $v_{0,i_1}$ and enter $v_{a,i_2}$ {\rm (}$(i_1,i_2) \in \{1,2\} \times \{1,2\}${\rm )}, two of which depart from $v_{a,i}$ and enter $v_{A,i}$ {\rm (}$i=1,2${\rm )}, and the remaining two of which depart from $v_{A,i}$ and enter $v_{1,i}$ {\rm (}$i=1,2${\rm )}.
\item This is for the case $m_1 \geq 2$ and $m_2=1$. The digraph is a digraph with exactly six vertices $v_{0,i}$, $v_{a}$, $v_{A}$, and $v_{1,i}$ {\rm (}$i=1,2${\rm )} and exactly five edges two of which depart from $v_{0,i}$ {\rm (}$i=1,2${\rm )} and enter $v_a$, one of which departs from $v_a$ and enters $v_A$, and the remaining two of which depart from $v_A$ and enter  $v_{1,i}$ {\rm (}$i=1,2${\rm )} .   
\item This is for the case $(m_1,m_2)=(1,1)$. The digraph is a digraph with exactly eight vertices $v_{0,i}$, $v_{a,i}$, $v_{A,i}$, and $v_{1,i^{\prime}}$ {\rm (}$i=1,2$ and $i^{\prime}=1,2,3,4${\rm )} and exactly ten edges four of which depart from $v_{0,i_1}$ and enter $v_{a,i_2}$ {\rm (}$(i_1,i_2) \in \{1,2\} \times \{1,2\}${\rm )}, two of which depart from $v_{a,i}$ and enter $v_{A,i}$ {\rm (}$i=1,2${\rm )}, and the remaining four of which depart from $v_{A,i_1}$ and enter $v_{1,i_2}$ {\rm (}$(i_1,i_2) \in \{(1,1),(1,2),(2,3),(2,4)\}${\rm )}.
\end{itemize}
Furthermore, the level set ${{{\pi}_{m+3,1} {\mid}_{X_{D_{S_a,a_{-1},a_{1}},m_1,m_2}}}^{\prime,{\rm P}}}^{-1}(u)$ {\rm (}$0<u<1${\rm )} of the function is as follows.
\begin{itemize}
\item {\rm (}The case $0<u<m_a$.{\rm )}
The preimage ${{\pi}_{m+3,2} {\mid}_{X_{D_{S_a,a_{-1},a_{1}},m_1,m_2}}}^{-1}(C_{{\rm s},1} \sqcup C_{{\rm s},2})$ 
of the disjoint union of two connected and compact curves $C_{{\rm s},1}$ and $C_{{\rm s},2}$ of ${\overline{D_{S_a,a_{-1},a_{1}}}}^{{\mathbb{R}}^2}$ homeomorphic to $D^1$.

 In the case $m_1=1$, this is homeomorphic to the disjoint union $(S^{m_1-1} \times S^{m_2}) \sqcup (S^{m_1-1} \times S^{m_2})$ and diffeomorphic to it in the case $m \neq 5$  or $(m_1,m_2)=(3,3),(4,2)$. In the case $m_1=1$, this is homeomorphic to the disjoint union $S^{m_2} \sqcup S^{m_2}$ and diffeomorphic to it in the case $m_2 \neq 4$. 
\item {\rm (}The case $m_a<u<m_A$.{\rm )} 
The preimage ${{\pi}_{m+3,2} {\mid}_{X_{D_{S_a,a_{-1},a_{1}},m_1,m_2}}}^{-1}(C_{\rm s})$ of some connected and compact curve $C_{\rm s}$ of $\overline{D_{S_a,a_{-1},a_{1}}}^{{\mathbb{R}}^2}$ homeomorphic to $S^1$.
This is homeomorphic to $S^{m_1-1} \times S^{m_2}$ and diffeomorphic to it in the case $m \neq 5,6$ or $(m_1,m_2)=(3,3),(4,2)$. In the case $m_1=1$, this is homeomorphic to $S^{m_2} \sqcup S^{m_2}$ and diffeomorphic to it in the case $m_2 \neq 4$. 

\item {\rm (}The case $m_A<u<1$.{\rm )} 
The preimage ${{\pi}_{m+3,2} {\mid}_{X_{D_{S_a,a_{-1},a_{1}},m_1,m_2}}}^{-1}(C_{\rm s})$ of some connected and compact curve $C_{\rm s}$ of $\overline{D_{S_a,a_{-1},a_{1}}}^{{\mathbb{R}}^2}$ homeomorphic to $S^1$.
In the case $m_1 \neq 1$, this is homeomorphic to $S^1 \times S^{m_1-1} \times S^{m_2-1}$ and diffeomorphic to it in the case $m \neq 5,6$. In the case $m_1=1$ and $m_2 \neq 1$, this is homeomorphic to the disjoint union of two copies of $S^1 \times S^{m_2-1}$ and diffeomorphic to it in the case $m_2 \neq 4,5$. In the case $m_1 \neq 1$ and $m_2=1$, this is homeomorphic to the disjoint union of two copies of $S^1 \times S^{m_1-1}$ and diffeomorphic to it in the case $m_1 \neq 4,5$. In the case $m_1=1$ and $m_2=1$, this is homeomorphic to the disjoint union of four copies of $S^1$. 
\item {\rm (}The case $u=m_a, m_A$.{\rm )}
The preimage ${{\pi}_{m+3,2} {\mid}_{X_{D_{S_a,a_{-1},a_{1}},m_1,m_2}}}^{-1}(C_{\rm s})$ of some connected and compact curve $C_{\rm s}$ of $\overline{D_{S_a,a_{-1},a_{1}}}^{{\mathbb{R}}^2}$ homeomorphic to $S^1$.
\end{itemize}
\end{enumerate}
\end{Thm}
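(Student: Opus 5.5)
The plan is to reduce everything to the image quadrilateral via the map ${\pi}_{m+3,2} {\mid}_{X_{D_{S_a,a_{-1},a_{1}},m_1,m_2}}$, which for brevity we write $f$ in this sketch.

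\textbf{Step 1: the derivative factors through $f$.} By the computation in the proof of Theorem \ref{thm:3}, the canonical 1st derivative ${{\pi}_{m+3,1} {\mid}_{X_{D_{S_a,a_{-1},a_{1}},m_1,m_2}}}^{\prime,{\rm P}}$ factors as $g \circ f$. Here $g$ is a continuous function on ${\overline{D_{S_a,a_{-1},a_{1}}}}^{{\mathbb{R}}^2}$, because the gradient vector of $x_1$ only depends on $x_1$ and on the line $S_{{\rm l},t}$ through the image point. Parameterize the closed quadrilateral by $(x_1,t) \in [-a,a] \times [-1,1]$, and write $s(t)$ for the slope of $S_{{\rm l},t}$. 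Combining the $S^{m_1}$-factor from Proposition \ref{prop:2} with the slope factor, I would derive an explicit formula of the form $g(x_1,t)=h(x_1,s(t))$. Here $h$ vanishes exactly at $x_1=\pm a$, and on $x_1=0$ it equals $\frac{1}{1+{s(t)}^2}$. Each level set of the derivative is then $f^{-1}(g^{-1}(u))$.

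\textbf{Step 2: the level curves of $g$.} This is a planar analysis.
\begin{itemize}
\item In case (\ref{thm:3.1.1}), $s$ is constant. Then $g$ depends only on $x_1$, and its level sets are the vertical segments $\{x_1\}\times[-1,1]$.
\item In case (\ref{thm:3.1.2}), $s(t)$ is monotone and never $0$, so $g$ on $x_1=0$ is monotone in $t$ from $\frac{1}{1+{a_{-1,1}}^2}$ to $\frac{1}{1+{a_{1,1}}^2}$.
\begin{itemize}
\item For $u<m_a$ the level set is two arcs, one on each side $x_1<0$ and $x_1>0$, each crossing from the bottom edge to the top edge.
\item For $m_a<u<m_A$ the two arcs join through the $x_1=0$ axis into a single arc ending on one horizontal edge.
\item At $m_A$ the arc degenerates to a point on the axis.
\end{itemize}
\item In case (\ref{thm:3.1.3}), $s$ vanishes at the interior value $t_0$, giving the extra maximum $1$ there. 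The levels with $m_A<u<1$ are closed loops around $(0,t_0)$, and all level curves are controlled by the Morse–Bott data in Theorem \ref{thm:3}.
\end{itemize}
I would verify the monotonicity of $h$ in $|x_1|$ for fixed $t$ by implicit differentiation. Together with the critical set already identified in Theorem \ref{thm:3}, this shows $g$ has no other critical points in the open quadrilateral, so the level-curve topology is as listed.

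\textbf{Step 3: preimages of the curves.} For a point $(x_1,t)$, the fiber of $f$ is $S^{m_1-1}_{\sqrt{a^2-x_1^2}} \times S^{m_2-1}_{\sqrt{1-t^2}}$. A factor collapses to a point on the vertical edges ($x_1=\pm a$) or on the horizontal edges ($t=\pm 1$), respectively.
\begin{itemize}
\item An arc from bottom edge to top edge, avoiding the vertical edges, has preimage $S^{m_1-1}\times S^{m_2}$, by the suspension-type gluing of Proposition \ref{prop:2}.
\item A closed loop in the interior has preimage $S^1 \times S^{m_1-1}\times S^{m_2-1}$.
\item An arc that meets $x_1=0$ and ends on one horizontal edge is handled the same way, since the $S^{m_1-1}$ factor survives.
\end{itemize}
When $m_1=1$ or $m_2=1$, the relevant $S^0$ factor splits the preimage into components, and this gives the doubled vertex counts in the listed digraphs. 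The vertices of the Reeb digraph are the critical contours found in Theorem \ref{thm:3}; the edges are read off by tracking components of these preimages as $u$ increases from $0$.

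\textbf{Main obstacle.} I expect the main obstacle to be smoothness of the preimage description. The preimage is only a topological manifold assembled from fiberwise pieces, and producing a homeomorphism is straightforward from the bundle structure. The upgrade to diffeomorphism I would obtain by appealing to uniqueness of smooth structures on the listed products in the dimensions not excluded; this is exactly why the exceptional dimensions appear in the statement. The planar monotonicity in Step 2 is the other point requiring care, and I would settle it by differentiating the explicit $h$.
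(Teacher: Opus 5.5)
\textbf{Gap: the explicit form of $g$ in Step 1 is false, and Step 2 rests on it.}

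Your plan follows the paper's route. You factor the derivative through $f$, analyse the planar level curves, take preimages, and quote smoothing results. The factorization $g\circ f$ itself is fine: the fibres of $f$ are orbits of $O(m_1)\times O(m_2)$ acting on the $y$-coordinates, and this action preserves $X:=X_{D_{S_a,a_{-1},a_{1}},m_1,m_2}$ and $x_1$. The problem is the explicit form you import from the proof of Theorem \ref{thm:3}, namely $g=h(x_1,s(t))$ with $g=\frac{1}{1+s(t)^2}$ on $x_1=0$.

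The graph $x_2=L(x_1,t)$ tilts the $t$-direction as well as the $x_1$-direction, with $\partial x_2/\partial t=\frac12\Delta(x_1)$, where $\Delta(x_1):=(a_{1,1}-a_{-1,1})x_1+a_{1,2}-a_{-1,2}>0$. Both tilts go into the same coordinate $x_2$, so they interact. The induced metric on $X\cong S^{m_1}_a\times S^{m_2}$ is the product metric plus $\omega\otimes\omega$, with $\omega=s(t)\,dx_1+\frac12\Delta(x_1)\,dt$. Inverting it (Sherman--Morrison) gives
\[
g(x_1,t)=\frac{A\,\bigl(4+\Delta(x_1)^2B\bigr)}{4+4s(t)^2A+\Delta(x_1)^2B},\qquad A=1-\frac{x_1^2}{a^2},\quad B=1-t^2 .
\]
On $x_1=0$ this equals $\frac{1}{1+s(t)^2}$ only where $t=\pm1$ or $s(t)=0$. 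Equivalently, once $|t|<1$ and $s\neq0$, the vector $\frac{1}{1+s^2}(s^2,-s,0,\dots,0)$ is not normal to $X$. It pairs to $-\frac{s\Delta}{2(1+s^2)}$ with the tangent vector $(0,\frac12\Delta,1,0,\eta)$ coming from the $S^{m_2}$ factor.

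\textbf{Consequences for the planar picture.}

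In case (\ref{thm:3.1.1}) with $a_1\neq0$, $g$ is not a function of $x_1$ alone, and $\{x_1=0\}$ is not critical. Take $a=1$, $m_1=m_2=1$, $a_{-1,1}=a_{1,1}=1$, $a_{\pm1,2}=\pm1$. Then $X$ is the torus $(\cos\alpha,\cos\alpha+\cos\beta,\cos\beta,\sin\alpha,\sin\beta)$ and $g=\frac{(1+\sin^2\beta)\sin^2\alpha}{1+\sin^2\alpha+\sin^2\beta}$. Its critical set consists of:
\begin{itemize}
\item the two circles $x_1=\pm1$;
\item four saddles of value $\frac12$, at $x_1=0$, $t=\pm1$;
\item four maxima of value $\frac23$, at $x_1=0$, $t=0$.
\end{itemize}
The Reeb digraph therefore has eight vertices and eight edges, not the one of Proposition \ref{prop:2}.

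In case (\ref{thm:3.1.2}), with the slopes fixed, $g(0,0)\to1$ as $a_{1,2}-a_{-1,2}\to\infty$. So $g(0,\cdot)$ is not monotone, and there is an interior maximum above $m_A$.

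The paper's proof rests on the same assertion from the proof of Theorem \ref{thm:3}, that at $x_1=0$ the unit gradient is tangent to $S_{{\rm l},t}$. So there is no argument in the paper that you are missing. Items (\ref{thm:4.1}) and (\ref{thm:4.2}) fail in the examples above and need extra hypotheses. One such case is $a_1=0$ in (\ref{thm:4.1}): there $g=A$, and your Steps 2--3 go through. What survives in general is the factorization and the preimage computations of Step 3.

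\textbf{Smoothing step.} The diffeomorphism upgrade cannot come from uniqueness of smooth structures on products of spheres. For an exotic $7$-sphere $\Sigma^7$, $S^1\times\Sigma^7$ is homeomorphic but not diffeomorphic to $S^1\times S^7$. A regular level set is already a smooth submanifold fibred over the planar level curve, and its diffeomorphism type has to be read off from that fibred structure.
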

\begin{proof}
STEP 4-1 The case (\ref{thm:4.1}). \\
We prove the case (\ref{thm:4.1}).
By our construction, each contour of ${{\pi}_{m+3,1} {\mid}_{X_{D_{S_a,a_{-1},a_{1}},m_1,m_2}}}^{\prime,{\rm P}}$ is represented as a connected component of  ${{\pi}_{m+3,2} {\mid}_{X_{D_{S_a,a_{-1},a_{1}},m_1,m_2}}}^{-1}(\{(x_{1,0},x_2) \mid x_2 \in \mathbb{R}\})$ for a fixed number $-a \leq x_{1,0} \leq a$.  \\
\ \\
STEP 4-2 The cases  (\ref{thm:4.2}) and (\ref{thm:4.3}). \\
\ \\
We prove the cases (\ref{thm:4.2}) and (\ref{thm:4.3}). 
${{{\pi}_{m+3,1} {\mid}_{X_{D_{S_a,a_{-1},a_{1}},m_1,m_2}}}^{\prime,{\rm P}}}^{-1}(u)$ is, in the case $u=0,1$, discussed in Theorem \ref{thm:3} (\ref{thm:3.1.2}, \ref{thm:3.1.3}).
In the case $0<u<1$, this is also the preimage ${{\pi}_{m+3,2} {\mid}_{X_{D_{S_a,a_{-1},a_{1}},m_1,m_2}}}^{-1}(C_{\rm s})$ of some connected, closed and compact subset $C_{\rm s}$ of $\overline{D_{S_a,a_{-1},a_{1}}}^{{\mathbb{R}}^2}$ or
the preimage ${{\pi}_{m+3,2} {\mid}_{X_{D_{S_a,a_{-1},a_{1}},m_1,m_2}}}^{-1}(C_{{\rm s},1} \sqcup C_{{\rm s},2})$ 
of the disjoint union of two connected, closed and compact subsets $C_{{\rm s},1}$ and $C_{{\rm s},2}$ of ${\overline{D_{S_a,a_{-1},a_{1}}}}^{{\mathbb{R}}^2}$, and explained as follows.

Let the canonical projection to the 2nd component be denoted by ${\pi}_{2,1,2}:{\mathbb{R}}^2 \rightarrow \mathbb{R}$.\\
\ \\
STEP 4-2-1 The case ${{{\pi}_{m+3,1} {\mid}_{X_{D_{S_a,a_{-1},a_{1}},m_1,m_2}}}^{\prime,{\rm P}}}^{-1}(u)$ ($0<u \leq m_A$).
\begin{itemize}
\item In the case $0<u<m_{a}$, we need disjoint two connected, closed and compact subsets $C_{{\rm s},1} \subset {\overline{D_{S_a,a_{-1},a_{1}}}}^{{\mathbb{R}}^n} \bigcap \{(x_1,x_2) \mid x_1 <0, x_2 \in \mathbb{R}\}$ and $C_{{\rm s},2} \subset {\overline{D_{S_a,a_{-1},a_{1}}}}^{{\mathbb{R}}^2} \bigcap \{(x_1,x_2) \mid x_1 >0,  x_2 \in \mathbb{R}\}$ of $D_{S_a,a_{-1},a_{1}}$ such that the restrictions of the projection ${\pi}_{2,1,2}:{\mathbb{R}}^2 \rightarrow \mathbb{R}$ to $C_{{\rm s},i}$ are injective real-valued functions for $i=1,2$, and that the intersections $C_{{\rm s},i } \bigcup S_{{\rm l},\pm 1}$ are one-point sets for $i=1,2$ and consist of all points of the boundary of $C_{{\rm s},1} \sqcup C_{{\rm s},2}$.
We can see that ${{{\pi}_{m+3,1} {\mid}_{X_{D_{S_a,a_{-1},a_{1}},m_1,m_2}}}^{\prime,{\rm P}}}^{-1}(u)$ is diffeomorphic to $S^{m_1-1} \times S^{m_2}$.
\item In the case $u=m_{a}$ with $m_{a}<m_{A}$ and $m_a=\frac{1}{1+{a_{-1,1}}^2}$ ($m_a=\frac{1}{1+{a_{1,1}}^2}$), we need one connected, closed and compact subset $C_{\rm s} \subset {\overline{D_{S_a,a_{-1},a_{1}}}}^{{\mathbb{R}}^2}$ of ${\overline{D_{S_a,a_{-1},a_{1}}}}^{{\mathbb{R}}^n}$, as follows.
\begin{itemize}
\item $C_{\rm s,-}:=C_{\rm s} \bigcap \{(x_1,x_2) \mid x_1 \leq 0,  x_2 \in \mathbb{R}\}$ is a connected, closed and compact subset of ${\overline{D_{S_a,a_{-1},a_{1}}}}^{{\mathbb{R}}^n}$, and the restriction of the projection ${\pi}_{2,1,2}:{\mathbb{R}}^2 \rightarrow \mathbb{R}$ to $C_{{\rm s},-}$ is an injective real-valued function, with the intersections $C_{{\rm s},-} \bigcap S_{{\rm l},-1}$ and $C_{{\rm s},-} \bigcap S_{{\rm l},1}$ being one-point sets $\{(p_{-,1},p_{-,2})\} \subset S_{{\rm l},-1}$ ($\{(p_{-,1},p_{-,2})\} \subset S_{{\rm l},1}$) for suitable numbers $p_{-,1}<0$ and $p_{-,2}$, and $\{(0,a_{1,2})\}$ (resp. $\{(0,a_{-1,2})\}$), respectively. 
$C_{\rm s,-}$ is homeomorphic to $D^1$. In addition, $C_{{\rm s},-} \bigcap (S_{{\rm l},-1} \bigcup S_{{\rm l},1})$ consists of all points of the boundary of $C_{{\rm s},-}$. 
\item $C_{\rm s,+}:=C_{\rm s} \bigcap \{(x_1,x_2) \mid x_1 \geq 0,  x_2 \in \mathbb{R}\}$ is a connected, closed and compact subset of ${\overline{D_{S_a,a_{-1},a_{1}}}}^{{\mathbb{R}}^n}$, and the restriction of the projection ${\pi}_{2,1,2}:{\mathbb{R}}^2 \rightarrow \mathbb{R}$ to $C_{{\rm s},+}$ is an injective real-valued function, with the intersections $C_{{\rm s},+} \bigcup S_{{\rm l},-1}$ and $C_{{\rm s},+} \bigcup S_{{\rm l},1}$ being one-point sets $\{(p_{+,1},p_{+,2})\} \subset S_{{\rm l},-1}$ (resp. $\{(p_{+,1},p_{+,2})\} \subset S_{{\rm l},1}$) for suitable numbers $p_{+,1}>0$ and $p_{+,2}$, and $\{(0,a_{1,2})\}$ (resp. $\{(0,a_{-1,2})\}$), respectively. $C_{\rm s,+}$ is homeomorphic to $D^1$. In addition $C_{{\rm s},+} \bigcap (S_{{\rm l},-1} \bigcup S_{{\rm l},1})$ consists of all points of the boundary of $C_{{\rm s},+}$.
\item $C_{\rm s} \bigcap \{(0,x_2) \mid x_2 \in \mathbb{R}\}=\{(0,a_{1,2})\} {\rm (}\text{resp.
} \{(0,a_{-1,2})\}{\rm )}$.
\item $C_{\rm s}$ is homeomorphic to $D^1$.
\end{itemize}
\item In the case $u=m_{a}=m_{A}$ and $m_a=\frac{1}{1+{a_{-1,1}}^2}=\frac{1}{1+{a_{1,1}}^2}$, we need one connected, closed and compact subset $C_{\rm s} \subset {\overline{D_{S_a,a_{-1},a_{1}}}}^{{\mathbb{R}}^2}$ of ${\overline{D_{S_a,a_{-1},a_{1}}}}^{{\mathbb{R}}^n}$, as follows.
\begin{itemize}
\item $C_{\rm s,-}:=C_{\rm s} \bigcap \{(x_1,x_2) \mid x_1 \leq 0\}$ is a connected, closed and compact subset of ${\overline{D_{S_a,a_{-1},a_{1}}}}^{{\mathbb{R}}^n}$, and the restriction of the projection ${\pi}_{2,1,2}:{\mathbb{R}}^2 \rightarrow \mathbb{R}$ to $C_{{\rm s},-}$ is an injective real-valued function, with the intersections $C_{{\rm s},-} \bigcap S_{{\rm l},-1}$ and $C_{{\rm s},-} \bigcap S_{{\rm l},1}$ being one-point sets $\{(0,a_{-1,2})\}$, and $\{(0,a_{1,2})\}$, respectively, and consisting of all points of the boundary of $C_{{\rm s},-}$. 
\item $C_{\rm s,+}:=C_{\rm s} \bigcap \{(x_1,x_2) \mid x_1 \geq 0\}$ is a connected, closed and compact subset of ${\overline{D_{S_a,a_{-1},a_{1}}}}^{{\mathbb{R}}^n}$, and the restriction of the projection ${\pi}_{2,1,2}:{\mathbb{R}}^2 \rightarrow \mathbb{R}$ to $C_{{\rm s},+}$ is an injective real-valued function, with the intersections $C_{{\rm s},+} \bigcap S_{{\rm l},-1}$ and $C_{{\rm s},+} \bigcap S_{{\rm l},1}$ being one-point sets $\{(0,a_{-1,2})\}$, and $\{(0,a_{1,2})\}$, respectively, and consisting of all points of the boundary of $C_{{\rm s},+}$.
\item $C_{\rm s} \bigcap \{(0,x_2) \mid x_2 \in \mathbb{R}\}$ is equal to the union of the boundaries of $C_{\rm s,-}$ and $C_{\rm s,+}$.
\item $C_{\rm s}$ is homeomorphic to $S^1$.
\end{itemize}
\item In the case $m_{a}<u<m_A$ with $m_{a} \neq m_{A}$ and $m_a=\frac{1}{1+{a_{-1,1}}^2}$ ($m_a=\frac{1}{1+{a_{1,1}}^2}$), we need one connected, closed and compact subset $C_{\rm s} \subset {\overline{D_{S_a,a_{-1},a_{1}}}}^{{\mathbb{R}}^2}$ of ${\overline{D_{S_a,a_{-1},a_{1}}}}^{{\mathbb{R}}^n}$, as follows.
\begin{itemize}
\item The intersection $C_{\rm s} \bigcap S_{{\rm l},-1}$ ($C_{\rm s} \bigcap S_{{\rm l},1}$) is empty.
\item The intersection $C_{\rm s} \bigcap S_{{\rm l},1}$ (resp. $C_{\rm s} \bigcap S_{{\rm l},-1}$) is a two-point set\\
$\{(p_{m_a,m_A,-},p_{m_a,m_A,1}),(p_{m_a,m_A,+},p_{m_a,m_A,2})\}$ for suitable numbers $p_{m_a,m_A,-}<0$ and $p_{m_a,m_A,+}>0$, and $C_{\rm s}$ contains the unique point of the form $(0,x_2)$, denoted by $(0,p_{m_a,m_A})$.
\item The restriction of the projection ${\pi}_{2,1,2}:{\mathbb{R}}^2 \rightarrow \mathbb{R}$ to $C_{\rm s}$ is an injective real-valued function on the closure of each connected component of $C_{\rm s}-\{(0,p_{m_a,m_A})\}$ where the closure is taken in $C_{\rm s}$.
\item $C_{\rm s}$ is homeomorphic to $D^1$.

\end{itemize}
\end{itemize}
STEP  4-2-2 The case ${{{\pi}_{m+3,1} {\mid}_{X_{D_{S_a,a_{-1},a_{1}},m_1,m_2}}}^{\prime,{\rm P}}}^{-1}(u)$ ($m_A \leq u \leq 1$). \\
There exists some difference for the case $m_A<u<1$, between the cases (\ref{thm:4.2}) and (\ref{thm:4.3}).
\begin{itemize}
\item This is the case of (\ref{thm:4.2}). In this case, we do not need to consider the case $m_A<u<1$.
\item This is the case of (\ref{thm:4.3}). In the case $m_A<u<1$, we need one connected, closed and compact subset $C_{\rm s} \subset {\overline{D_{S_a,a_{-1},a_{1}}}}^{{\mathbb{R}}^2}$ of ${\overline{D_{S_a,a_{-1},a_{1}}}}^{{\mathbb{R}}^n}$, as follows.
\begin{itemize}
\item The intersection $C_{\rm s} \bigcap S_{{\rm l},-1}$ ($C_{\rm s} \bigcap S_{{\rm l},1}$) is empty.
\item The intersection $C_{\rm s} \bigcap S_{{\rm l},1}$ (resp. $C_{\rm s} \bigcap S_{{\rm l},-1}$) is a two-point set\\
$\{(p_{m_a,m_A,-},p_{m_a,m_A,1}),(p_{m_a,m_A,+},p_{m_a,m_A,2})\}$ for suitable numbers $p_{m_a,m_A,-}<0$ and $p_{m_a,m_A,+}>0$, and $C_{\rm s}$ contains exactly two points of the form $(0,x_2)$, denoted by $(0,p_{m_a,m_A,0,1})$ and $(0,p_{m_a,m_A.0,2})$ respectively.
\item The restriction of the projection ${\pi}_{2,1,2}:{\mathbb{R}}^2 \rightarrow \mathbb{R}$ to $C_{\rm s}$ is an injective real-valued function on the closure of each connected component of $C_{\rm s}-\{(0,p_{m_a,m_A,0,1}),(0,p_{m_a,m_A,0,2})\}$ where the closure is taken in $C_{\rm s}$. $C_{\rm s}$ is homeomorphic to $D^1$.

\end{itemize}
\end{itemize}

Throughout the present proof, the curves $C_{{\rm s},1}$, $C_{{\rm s},2}$, and $C_{\rm s}$, are smooth, densely, by fundamental arguments on existence of smooth curves (e.g. curve selection lemma in real analytic category in this scene and refer to \cite{milnor3}).  
Reeb digraphs of functions ${{\pi}_{m+3,1} {\mid}_{X_{D_{S_a,a_{-1},a_{1}},m_1,m_2}}}^{\prime,{\rm P}}$ are recovered from these arguments. Level sets of the functions are also recovered from them. For understanding the topologies and the differentiable structures of the smooth manifolds appearing as these level sets of the functions, we apply some classical facts from the so-called h-cobordism or s-cobordism theorem in the smooth category in dimensions at least $5$. See \cite{milnor2} again for this. 
We use the classification theorem of $5$-dimensional closed and simply-connected manifolds in the smooth category, the piecewise smooth (PL) one, and the topology category, due to Barden (\cite{barden}), to know that a topological manifold homeomorphic to $S^2 \times S^3$ is diffeomorphic to it. 
Related to this, for h-cobordism and s-cobordism theory in the topology category, refer to \cite{freedmanquinn}.

This completes the proof.
\end{proof}
We give several Remarks.
\begin{Rem}
\label{rem:2}
This is related to Remark \ref{rem:1}. In Theorem \ref{thm:4}, the function $\bar{{\pi}_{m+3,1} {\mid}_{X_{D_{S_a,a_{-1},a_{1}},m_1,m_2}}}^{\prime,{\rm P}}:\overrightarrow{R_{{{\pi}_{m+3,1} {\mid}_{X_{D_{S_a,a_{-1},a_{1}},m_1,m_2}}}^{\prime,{\rm P}}}} \rightarrow \mathbb{R}$ on the Reeb digraph $\overrightarrow{R_{{{\pi}_{m+3,1} {\mid}_{X_{D_{S_a,a_{-1},a_{1}},m_1,m_2}}}^{\prime,{\rm P}}}}$ can be represented as the composition of the embedding into ${\mathbb{R}}^2$ with ${\pi}_{2,1}$ if and only if  $m_1 \neq 1$. See also Figure \ref{fig:2} and examples are depicted.
\end{Rem} 
\begin{figure}
	\includegraphics[width=40mm, height=40mm]{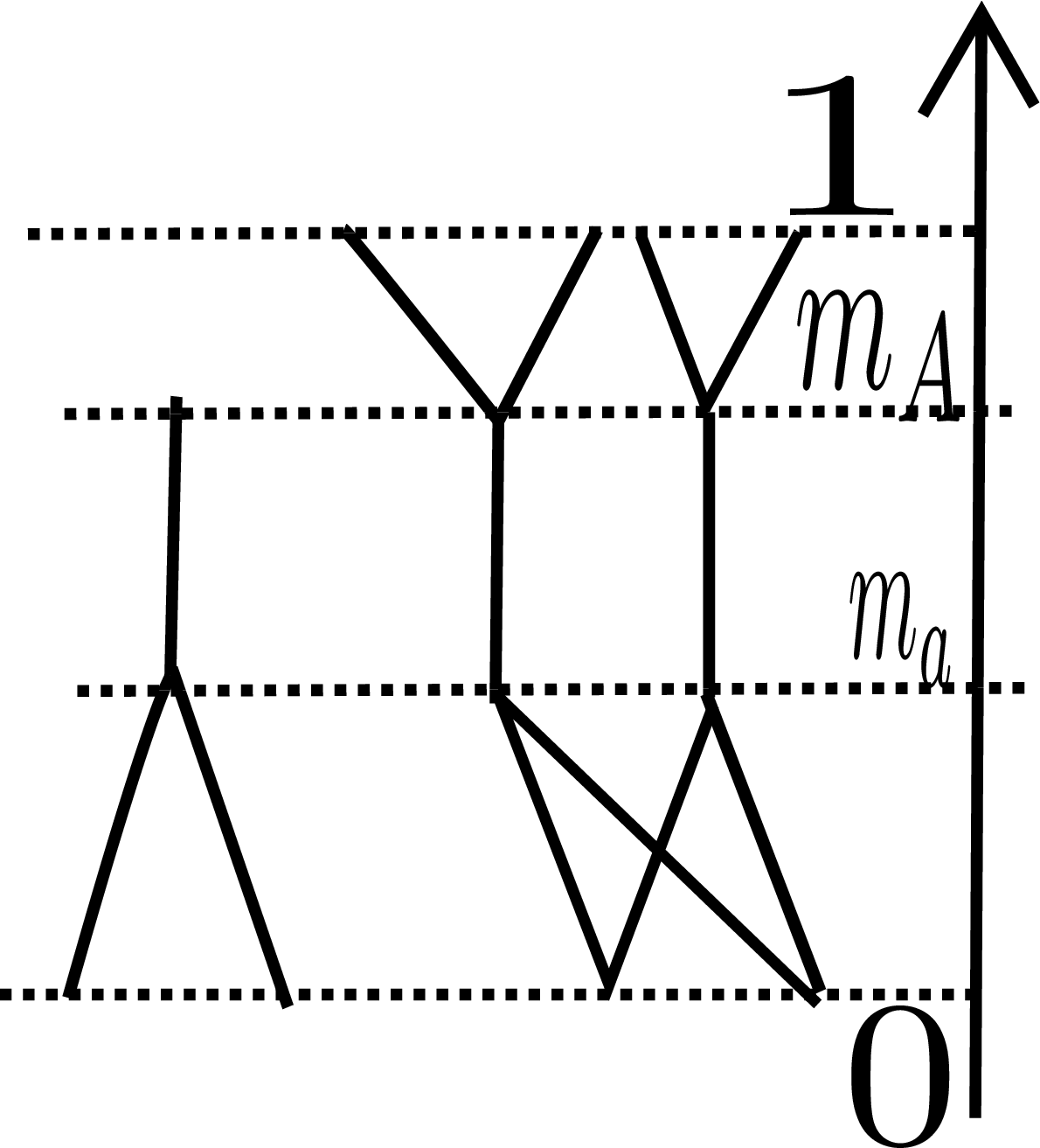}
	\caption{The functions $\bar{{\pi}_{m+3,1} {\mid}_{X_{D_{S_a,a_{-1},a_{1}},m_1,m_2}}}^{\prime,{\rm P}}:\overrightarrow{R_{{{\pi}_{m+3,1} {\mid}_{X_{D_{S_a,a_{-1},a_{1}},m_1,m_2}}}^{\prime,{\rm P}}}} \rightarrow \mathbb{R}$ on the Reeb digraphs
$\overrightarrow{R_{{{\pi}_{m+3,1} {\mid}_{X_{D_{S_a,a_{-1},a_{1}},m_1,m_2}}}^{\prime,{\rm P}}}}$, with the relations ${{\pi}_{m+3,1} {\mid}_{X_{D_{S_a,a_{-1},a_{1}},m_1,m_2}}}^{\prime,{\rm P}}=\bar{{\pi}_{m+3,1} {\mid}_{X_{D_{S_a,a_{-1},a_{1}},m_1,m_2}}}^{\prime,{\rm P}} \circ q_{{{\pi}_{m+3,1} {\mid}_{X_{D_{S_a,a_{-1},a_{1}},m_1,m_2}}}^{\prime,{\rm P}}}$ in Theorem \ref{thm:4}. The left case shows a case of Theorem \ref{thm:4} (\ref{thm:4.2}) with $m_1 \geq 2$ and the right case shows a case of (\ref{thm:4.3}) with $(m_1,m_2)=(1,1)$.}
\label{fig:2}
\end{figure}
\begin{Rem}
	\label{rem:3}
			If the curves $C_{{\rm s},1}$, $C_{{\rm s},2}$, and $C_{\rm s}$ are smooth globally, then "homeomorphic" in level sets ${{{\pi}_{m+3,1} {\mid}_{X_{D_{S_a,a_{-1},a_{1}},m_1,m_2}}}^{\prime,{\rm P}}}^{-1}(u)$ of the functions ${{\pi}_{m+3,1} {\mid}_{X_{D_{S_a,a_{-1},a_{1}},m_1,m_2}}}^{\prime,{\rm P}}$ of Theorem \ref{thm:4} can be replaced by "diffeomorphic". However, the author does not know the answer.
\end{Rem}
\begin{Rem}	
\label{rem:4}
Considerable parts of Theorem \ref{thm:3} and \ref{thm:4} hold for essentially general cases of \cite{kitazawa8}. Readers may check as a kind of exercises, where we do not assume non-trivial arguments of it, in main ingredients of the present paper.
\end{Rem}
\section{Conflict of interest, data availability, and so on.}
 The author is a researcher at Osaka Central Advanced Mathematical Institute (OCAMI researcher) and the institute is funded by MEXT Promotion of Distinctive Joint Research Center Program JPMXP0723833165. Note that he is not employed by the institute or the projects there. He thanks all there for the hospitality. 

This is submitted as a replacement of \cite{kitazawa8}, based on directions by https://arxiv.org/ in the initial submission.
 
No data other than the present file is generated, related to the present paper. Non-trivial arguments in formally unpublished preprints are not assumed. Referring to them to some extent contains no problem.


\end{document}